\pgfplotsset{compat=1.16}
\title{Learning multivariate functions with low-dimensional structures using polynomial bases}
\author[tuc]{D.~Potts\corref{cor}}
\ead{potts@math.tu-chemnitz.de}
\author[tuc]{M.~Schmischke}
\ead{michael.schmischke@math.tu-chemnitz.de}
\address[tuc]{Chemnitz University of Technology, Faculty of Mathematics, 09107 Chemnitz, Germany}
\newtheorem{theorem}{Theorem}
\newtheorem{lemma}[theorem]{Lemma}
\newtheorem{corollary}[theorem]{Corollary}
\newdefinition{remark}{Remark}
\newdefinition{observation}{Observation}
\newproof{prf}{Proof}
\newproof{pot}{Proof of Theorem \ref{thm2}}
\newcommand{\eps}{\mathrm\varepsilon}
\newcommand{\e}{\mathrm e}
\renewcommand{\i}{\mathrm i}
\renewcommand{\b}{\bm}
\DeclareMathOperator*{\argmin}{arg\,min}
\DeclareMathOperator*{\supp}{supp}
\renewcommand{\subset}{\subseteq}
\newcommand{\C}{\mathbb C}
\newcommand{\N}{\mathbb N}
\newcommand{\R}{\mathbb R}
\newcommand{\F}{\b F}
\newcommand{\T}{\mathbb T}
\newcommand{\TI}{[-1,1]}
\renewcommand{\P}{\mathbb P}
\newcommand{\I}{\mathcal{I}}
\newcommand{\X}{\mathcal{X}}
\newcommand{\fun}[3]{#1 \colon #2 \rightarrow #3}
\newcommand{\abs}[1]{\left|#1\right|}
\newcommand{\norm}[2]{\left\| #1 \right\|_{#2}}
\renewcommand{\d}{\,\mathrm{d}}
\newcommand{\bfh}{\hat{\b{f}}}
\renewcommand{\epsilon}{\varepsilon}
\newcommand{\x}{\bm{x}}
\renewcommand{\k}{\bm{k}}
\newcommand{\fc}[1]{\mathrm{c}_{\bm #1}\!\left(f\right)}
\newcommand{\fcc}[2]{\mathrm{c}_{\bm #1}\!\left(#2\right)}
\renewcommand{\cref}{\Cref}
\newcommand{\D}{\mathcal D}
\renewcommand{\u}{\bm u}
\newcommand{\uc}{\bm{u}^{\mathrm c}}
\newcommand{\au}{\abs{\u}}
\newcommand{\auc}{d-\abs{\u}}
\renewcommand{\v}{\bm v}
\newcommand{\av}{\abs{\v}}
\newcommand{\Pud}{\P_{\u}^{(d)}}
\newcommand{\Fud}{\mathbb{F}_{\u}^{(d)}}
\newcommand{\va}[1]{\sigma^2(#1)} 
\newcommand{\gsi}[2]{\varrho(#1,#2)}
\newcommand{\Lom}{\mathrm{L}_2([-1,1],\tilde\omega)}
\newcommand{\Lomu}{\mathrm{L}_2([-1,1]^{\au},\omega)}
\newcommand{\Lomd}{\mathrm{L}_2([-1,1]^d,\omega)}
\newcommand{\Lomdn}{\mathrm{L}_2([-1,1]^8,\omega)}
\renewcommand{\phi}{\varphi}
\renewcommand{\theta}{\vartheta}
\Crefname{observation}{Observation}{Observations}
\begin{document}

\begin{abstract}
In this paper we propose a method for the approximation of high-dimensional functions over finite intervals with respect to complete orthonormal systems of polynomials. An important tool for this is the multivariate classical analysis of variance (ANOVA) decomposition. For functions with a low-dimensional structure, i.e., a low superposition dimension, we are able to achieve a reconstruction from scattered data and simultaneously understand relationships between different variables. 
\end{abstract}

\begin{keyword}
	ANOVA decomposition \sep high-dimensional approximation \sep Chebyshev polynomials \sep orthogonal polynomials
\end{keyword}

\maketitle

\section{Introduction}\label{sec:int}

The approximation of high-dimensional functions is an active research topic and of high relevance in numerous applications. We assume a setting where we are given scattered data about an unknown function. The related approximation problem is generally referred to as scattered data approximation. Classical methods suffer from the curse of dimensionality in this setting, i.e., the amount of required data increases exponentially with the spatial dimension. Finding ways to circumvent the curse poses the main challenge in this high-dimensional setting. Besides finding an approximation there is the ever more important question of interpretability. In many application one wishes to understand how important the different dimensions and dimension interactions are in order to interpret the results. 

In this paper we consider functions $\fun{f}{[-1,1]^d}{\R}$ defined over the cube with a high spatial dimension $d\in\N$. Given scattered data about $f$, i.e., a finite sampling set $\X \subset [-1,1]^d$ and evaluations $\b y = (f(\x))_{\x\in \X}$, we aim to construct an approximation of $f$ and simultaneously understand its structure, i.e., how important variables and their interactions are. As opposed to black-box approximation or active learning, we may not choose the location of the nodes in $\X$. This prohibits us from using well-established spatial discretizations such as sparse grids, see \cite{Griebel05, Holtz11}, or rank-1 lattices, see \cite{SlJo94, PoVo17, PlPoStTa18}, that use low-dimensional structures in the node set. Our approach to circumvent the curse of dimensionality is to assume sparsity in the (analysis of variance) ANOVA decomposition of the function, i.e., we assume that $f$ is dominated by a small number of low-complexity interactions. This may also be referred to as sparsity-of-effects, see e.g.\ \cite{Wu2011}.

We focus on complete orthonormal systems $\{\phi_{\k}\}$ in $\Lomd$ where the functions are tensor products of univariate polynomials, e.g., the Chebyshev polynomials. Any function from the weighted Lebesgue space $\Lomd$ can then be written as a series $f(\x) = \sum_{\bm k \in \N_0^d} c_{\k} \, \phi_{\k} (\x)$ with coefficients $c_{\k}\in \R$, $\k \in \N_0^d$. Our method focuses on approximations using partial sums of the type $S_I f (\x) = \sum_{\bm k \in I} c_{\k} \, \phi_{\k} (\x)$, with grouped finite index sets $I \subset \N_0^d$ that reflect the low-dimensional structure of $f$. Determining a frequency index set $I$ that yields a good approximation while not scaling exponentially in $d$ poses one of the main challenges. 

The method presented here uses the classical ANOVA decomposition, see \cite{CaMoOw97, RaAl99, LiOw06, Holtz11}, as a main tool. The decomposition is important in the analysis of the dimensions for multivariate, high-dimensional functions. It has also been used in understanding the reason behind the success of certain quadrature methods for high-dimensional integration \cite{Ni92, BuGr04,GrHo10} and also infinite-dimensional integration \cite{BaGn14, GrKuSl16, KuNuPlSlWa17}. The unique and orthogonal ANOVA decomposition decomposes a $d$-variate function in $2^d$ ANOVA terms where each term belongs to a subset of $\{1,2,\dots,d\}$. The terms depends only on the variables in the corresponding subset and the number of these variables is the order of the ANOVA term.

Our method assumes sparsity by restricting the number of possible simultaneous dimension interactions. The knowledge that the function $f$ has a structure such that it can be well approximated using this sparsity assumption is the only information we require a-priori. The approach allows us to learn the basis coefficients by solving a least-squares problem. The problem is hard to solve in general since we are dealing with a large system matrix, but we are able to apply the concept of grouped transformation, see \cite{BaPoSc}, to tackle this issue. In summary, we present a method for the approximation of high-dimensional functions with a low-dimensional structure using possibly noisy scattered data. 

The outline of the paper is as follows. In \cref{sec:prelim} we introduce some necessary preliminaries for weighted Lebesgue spaces with complete orthonormal systems of polynomials. Moreover, we discuss the non-equispaced fast cosine transform and the fast polynomial transform for the evaluation of Chebyshev partial sums and computing the basis exchange from any polynomial bases to the Chebyshev system, respectively. In \Cref{sec:anova} we consider the properties of the ANOVA decomposition in the previously explained setting of weighted Lebesgue spaces. The approximation method itself is discussed in \Cref{sec:approx} with numerical examples in \Cref{sec:numerics}.

\section{Prerequisites, Notation and orthogonal Polynomials}\label{sec:prelim}

Let $\fun{\tilde\omega}{(-1,1)}{\R}$ be a non-negative weight function with $\int_{-1}^1\, \tilde\omega (x)\, \d x \, = \, 1$ then we define the weighted Lebesgue space 
\begin{equation*}
	\Lom \coloneqq \left\{ \fun{f}{[-1,1]}{\R} \colon \norm{f}{\Lom} = \sqrt{\int_{-1}^1  \abs{f(x)}^2 \, \tilde\omega(x) \d x} \right\}
\end{equation*}
with the inner product
$$
  \langle f,g\rangle  := \int_{-1}^1  f(x) g(x) \, \omega(x) \d x.
$$ Moreover, we consider a complete orthonormal system of polynomials $\{ \varphi_{k}  \}_{k \in \N_0}$ in $\Lom$. Here, we have $\varphi_k\in \Pi_k$ with $\Pi_k$ denoting the set of polynomials of degree $\le k$. Taking the products $\varphi_{\k}(\x)\coloneqq\prod_{j=1}^d \varphi_{k_j}(x_j)$ we find that the system $\{\varphi_{\k}\}_{\bm k \in \N_0^d}$ is an orthonormal basis in the tensor product space $\Lomd$ and the functions $f \in \Lomd$ have a unique representation with respect to the system $\{ \varphi_{\k}  \}_{\bm k \in \N^d_0}$ as series
\begin{equation*}
	f(\bm x) = \sum_{\bm k \in \N^d_0} \fc{k} \, \varphi_{\k} (\x),
\end{equation*}
where $\fc{k} \coloneqq \int_{\TI^d} f(\bm x)\, \varphi_{\k} (\x)\,\omega (\bm x)\d\bm x \in \R$, $\bm k \in \N^d_0$, are the basis coefficients of $f$. The density $\omega$ is a product density, i.e., $$\omega(\x) \coloneqq \prod_{s\in\D} \tilde{\omega}(x_s).$$

For a finite index set $\I \subset \N^d_0$, we call
\begin{equation}\label{eq:pre:fps}
	S(\I) f (\x) = \sum_{\k\in \I} \fc{k}\varphi_{\k} (\x),
\end{equation}
the partial sum of $f$ with respect to the index set $\I$. In this paper we make use of the fact, that we are able to compute the sum \eqref{eq:pre:fps} for arbitrary nodes $\x_j \in [-1,1]^d$, $j=1,2,\dots,M$, $M \in \N$, in an efficient manner. We realize this fast evaluation as follows:

Consider the univariate polynomial
$$
P\,:=\, \sum_{k=0}^{N} c_k\, \varphi_k\in \Pi_N
$$
with known real coefficients $c_k$. Our concern is the 
realization of a the basis
exchange from 
$\{ \varphi_k \}_{k=0}^N$ to $\{ T_k \}_{k=0}^N$ in $\Pi_N$
that produces the Chebyshev coefficients $\tilde c_k$ in 
\begin{equation*}
  \label{3.3}
  P\, =\, \sum_{k=0}^{N} \tilde c_k\, T_k\, .
\end{equation*}
By $T_k:=\sqrt{2}^{1-\delta_{k,0}}\cos ( k \arccos \cdot )$, we denote
the normed Chebyshev polynomials of first kind.
Note that $\arccos:\, [-1,1] \to [0,\pi)$ is the inverse
function of $\cos$ restricted to $[0,\pi)$.
As known, the Chebyshev polynomials form a complete orthonormal system in
$\Lom$ with the special
Chebyshev density $\tilde\omega (x):=\pi^{-1} \cdot (1-x^2)^{-1/2}$.
For $m,n \in \N_0$ we have
$$
  \langle T_m ,T_n \rangle  = \left\{
  \begin{array}{ll}
  1 & m=n , \\
  0 & m\neq n\, .
  \end{array}\right.
$$
An algorithm, that realize the fast evaluation of $\tilde c_k$ from $c_k$ is known as discrete polynomial transform  and was developed in \cite{postta98}, see also the approach of
Driscoll and Healy for the transposed problem developed in
\cite{drhe}. Our approach computes the basis exchange with ${\cal O} (N \log^2\! N)$
arithmetical operations by a divide--and--conquer technique combined with
fast polynomial multiplications. The algorithm was designed for arbitrary
polynomials $P_n$ satisfying a three--term recurrence relation, see \cite[Section 6.5]{PlPoStTa18}. We introduce the notation $T_{\k}(\x):=\prod_{j=1}^d T_{k_j}(x_j)$
and observe that this algorithm can be straightforward generalized to the tensor product case,
such that we realize the basis exchange, i.e., compute the Chebyshev coefficients $\tilde c_{\k}\in \R$ from the coefficients $c_{\k}\in \R$, 
\begin{equation*}
	P = \sum_{\k\in \{0,1,\dots,N\}^d} c_{\k}\varphi_{\k} 
	= \sum_{\k\in \{0,1,\dots,N\}^d} \tilde c_{\k} T_{\k} ,
\end{equation*}
in ${\cal O} (N^d \log^{2d}\! N)$ arithmetical operations.
Knowing the Chebyshev coefficients $\tilde c_{\k}$, the values $P(\x_{j})$, $j=0,\ldots,M$, can be computed by the non-equidistant cosine transform at the nodes $\arccos(\x_j)$
by \cite[Algorithm 7.10]{PlPoStTa18} in the complexity of ${\cal O} \bigl(N^d \log  N+  M\bigr)$ 
arithmetical operations.
In summary we are able to compute the polynomial $P$ at all arbitrary nodes $\x_j$, $j=0,\ldots, M$
\begin{equation}\label{eq:pre:P}
	P(\x_j) = \sum_{\k\in \{0,1,\dots,N\}^d} c_{\k}\varphi_{\k} (\x_j),
\end{equation}
in only ${\cal O} (N^d \log^{2d}\! N + M)$ arithmetical operations. For the special case of Chebyshev polynomials, i.e., $\varphi_{\k}=T_{\k}$ we need only ${\cal O} (N^d \log\! N + M)$ arithmetical operations, since the discrete polynomial transform is not necessary.  We stress on the fact, that a fast algorithm 
 implies the factorization of the 
transform matrix 
$\mathbf P:=\left(\varphi_{\k}(\x_j)\right)_{j=0,\ldots,M,\k\in \{0,1,\dots,N\}^d}$ 
into a product of sparse matrices. Consequently,
once a fast algorithm for \eqref{eq:pre:P} is known, a 
fast algorithm for the \lq\lq transposed\rq\rq~problem 
\begin{equation}\label{eq:pre:P2}
	c_{\k} = \sum_{j=0}^M f_j\varphi_{\k} (\x_j),\quad \k\in \{0,1,\dots,N\}^d 
\end{equation}
 with the transform matrix $\mathbf  P^{\mathrm T}$ and the same arithmetical complexity is also available by transposing the sparse matrix product. The algorithms are part of the software package \cite{nfft3}.
 
In order to overcome the high complexity with growing dimensions $d$, we focus on models with a low superposition dimension, see \cref{sec:anova}. To this end we assume, that the effects of degree interactions among the input variables weaken rapidly or vanish altogether.  

\section{Classical Analysis of Variance Decomposition on the Interval}\label{sec:anova}

In this section we introduce the ANOVA decomposition in the setting of weighted Lebesgue spaces with orthonormal polynomials als bases. See also \cite{CaMoOw97, LiOw06, KuSlWaWo09, Holtz11, PoSchm19a}.
For a given spatial dimension $d$ we denote with $\D = \{ 1,2,\dots,d \}$ the set of coordinate indices and subsets as bold small letters, e.g., $\u \subset \D$. The complement of those subsets are always with respect to $\D$, i.e., $\uc = \D \setminus \u$. For a vector $\x \in \C^d$ we define $\x_{\u} = ( x_i )_{i \in \u} \in \C^{\au}$.  Furthermore, we use the $p$-norm (or quasi norm) of a vector which is defined as
\begin{equation*}
\norm{\x}{p} = \begin{cases}
\abs{ \{ i \in \D \colon x_i \neq 0 \} } \quad&\colon p = 0 \\
\left( \sum_{i=1}^d \abs{x_i}^p \right)^{1/p} &\colon 0 < p < \infty \\
\max_{\i\in\D} \abs{x_i} &\colon p = \infty
\end{cases}
\end{equation*}
for $\x\in\R^d$. The space $\Lomd$ with product density $\omega$ and complete orthonormal system $\{\phi_{\k}\}_{\k \in \N_0^d}$ consisting of tensor product functions, see \Cref{sec:int}, is fixed.

We start by defining the integral projection operator
\begin{equation}\label{eq:anova:projection_operator}
\mathrm{P}_{\u} f ( \bm{x}_{\u} ) \coloneqq \int_{\TI^{\auc}}  f(\bm x) \omega(\bm x_{\uc}) \d\bm{x}_{\uc}
\end{equation}
that integrates over the variables $\bm{x}_{\uc}$. Clearly, the image $\mathrm{P}_{\u} f$ depends only on the variables $\bm{x}_{\u} \in \TI^{\au}$. Furthermore, we define the index set
\begin{equation}\label{eq:anova:pud}
\Pud \coloneqq \left\{ \bm k \in \N^d_0 \colon \bm{k}_{\uc} = \bm 0 \right\}
\end{equation}
which can be identified with $\N^{\au}_0$ using the mapping $\bm k \mapsto \bm{k}_{\u}$ as well as the index set
\begin{equation*}
\Fud \coloneqq \left\{ \bm k \in \N_0^d \colon \mathrm{supp}\, \k = \bm u \right\}
\end{equation*}
which can be identified with $\N^{\au}$ using the mapping $\bm k \mapsto \bm{k}_{\u}$. Moreover, we use the convention 
$\N_0^{\abs{\emptyset}} = \{ 0 \}$ and
$\N^{\abs{\emptyset}} = \{ 0 \}$. 
The \textbf{ANOVA term} for $\u \subseteq \D$ is recursively defined as 
\begin{equation}\label{eq:anova:term}
f_{\u} \coloneqq \mathrm{P}_{\u} f - \sum_{\bm v \subsetneq \bm u } f_{\v}.
\end{equation}
We now prove a relationship between the basis coefficients of $\mathrm{P}_{\u} f$, $f_{\u}$ and $f$.
\begin{lemma}\label{lemma:anova:projections}
	Let $f \in \Lomd$ and $\bm\ell \in \N_0^{\au}$. Then
	\begin{equation*}
	\fcc{\ell}{\mathrm{P}_{\u} f} = \fc{k}
	\end{equation*}
	and \begin{equation*}
	\fcc{\ell}{f_{\u} } = \begin{cases}
	\fc{k} \quad&\colon \b\ell \in \N^{\au} \\
	\delta_{\u,\emptyset}\cdot\fc{0} &\colon \b\ell = \b 0 \\
	0 &\colon \text{otherwise}
	\end{cases}
	\end{equation*}
	for $\bm k \in \N_0^d$ with $\bm{k}_{\u} = \bm\ell$ and $\bm{k}_{\uc} = \bm 0$. Moreover, $\mathrm{P}_{\u} f, f_{\u} \in \Lomu$.
\end{lemma}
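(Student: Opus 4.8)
The plan is to prove the two coefficient identities essentially independently, with the second one resting on an induction over the subset lattice, while the $\Lomu$-membership comes out as a byproduct. Throughout, the single structural fact that drives everything is that the first basis function is constant: since $\varphi_0\in\Pi_0$ and $\int_{-1}^1\tilde\omega(x)\d x = 1$, orthonormality forces $\varphi_0\equiv 1$ after fixing the sign. Combined with the product form $\omega(\x) = \omega(\x_{\u})\,\omega(\x_{\uc})$, this yields the key identity $\varphi_{\k}(\x) = \varphi_{\bm\ell}(\x_{\u})$ for any $\k$ with $\k_{\u}=\bm\ell$ and $\k_{\uc}=\bm 0$, because the factors over $\uc$ are all equal to $\varphi_0 = 1$.

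For the first identity I would insert the definition \eqref{eq:anova:projection_operator} of $\mathrm{P}_{\u}f$ into $\fcc{\ell}{\mathrm{P}_{\u}f}$, apply Fubini (justified since $f\in\Lomd\subset\mathrm{L}_1$ with respect to the probability measure $\omega\,\d\x$), and collapse $\omega(\x_{\uc})\,\omega(\x_{\u})$ back to $\omega(\x)$. The integrand becomes $f(\x)\,\varphi_{\bm\ell}(\x_{\u})\,\omega(\x) = f(\x)\,\varphi_{\k}(\x)\,\omega(\x)$ by the identity above, so the integral is exactly $\fc{k}$. The very same Fubini computation, bounded via Jensen's (or Cauchy--Schwarz) inequality on the probability measure $\omega(\x_{\uc})\,\d\x_{\uc}$, gives $\norm{\mathrm{P}_{\u}f}{\Lomu}\le\norm{f}{\Lomd}$, hence $\mathrm{P}_{\u}f\in\Lomu$.

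For the second identity I would induct on $\au$, the base case $\u=\emptyset$ being immediate from the first identity. The engine is a reduction step for the lower terms: since $f_{\v}$ depends only on $\x_{\v}$, factoring the $\au$-dimensional integral and using $\lrangle{\varphi_{\ell_i}}{\varphi_0} = \delta_{\ell_i,0}$ for every $i\in\u\setminus\v$ shows that $\fcc{\ell}{f_{\v}}$ vanishes unless $\supp\bm\ell\subseteq\v$, in which case it equals the genuinely lower-dimensional coefficient $\fcc{\ell_{\v}}{f_{\v}}$. Substituting this into $\fcc{\ell}{f_{\u}} = \fcc{\ell}{\mathrm{P}_{\u}f} - \sum_{\v\subsetneq\u}\fcc{\ell}{f_{\v}}$ splits the argument by $\supp\bm\ell$: if $\supp\bm\ell = \u$ (the case $\bm\ell\in\N^{\au}$) every proper $\v$ drops a nonzero coordinate and all lower terms vanish, leaving $\fc{k}$; if $\bm\ell=\bm 0$ the inductive hypothesis annihilates every surviving term except $\v=\emptyset$, so the result is $\delta_{\u,\emptyset}\fc{0}$; and in the mixed case, writing $\bm w\coloneqq\supp\bm\ell$ with $\emptyset\ne\bm w\subsetneq\u$, the induction leaves only the term $\v=\bm w$, which equals $\fc{k}$ and cancels $\fcc{\ell}{\mathrm{P}_{\u}f}=\fc{k}$. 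Finally $f_{\u}\in\Lomu$ follows by induction from its defining relation, as a finite combination of the $\mathrm{L}_2$-functions $\mathrm{P}_{\u}f$ and $f_{\v}$.

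The step I expect to be most delicate is the mixed case of the induction: one has to verify that the unique surviving subset is exactly $\v=\bm w$ and that the $d$-dimensional index attached to its full-support coefficient $\fcc{\ell_{\bm w}}{f_{\bm w}}$ coincides with the original $\k$ (both are supported on $\bm w$ with identical entries there), so that the cancellation is exact. Keeping the identification between $\au$-dimensional coefficients, $\av$-dimensional coefficients, and the $d$-dimensional coefficients $\fc{k}$ consistent across the recursion is the main bookkeeping hazard; everything else reduces to Fubini and orthonormality.
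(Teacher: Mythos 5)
Your proposal is correct, and for the second identity it takes a genuinely different route from the paper. The paper does not induct: it invokes the closed-form (M\"obius-inverted) representation $f_{\u}(\x_{\u}) = \sum_{\v \subset \u} (-1)^{\au-\av} \mathrm{P}_{\v} f (\x_{\v})$, pairs each $\mathrm{P}_{\v}f$ against $\varphi_{\bm\ell}$ to get $\fcc{\ell}{f_{\u}} = \sum_{\v \subset \u} (-1)^{\au-\av}\, \mathrm{c}_{\k_{\v}}(\mathrm{P}_{\v}f)\,\delta_{\k_{\u\setminus\v},\b 0}$, and then kills the sum case by case with the binomial theorem ($\sum_{m} \binom{n}{m}(-1)^{n-m} = \delta_{n,0}$), the mixed case being handled by observing that $\delta_{\k_{\u\setminus\v},\b 0}=1$ exactly when $\u\setminus\overline{\v}\subset\v$ with $\overline{\v}=\{i\in\u\colon k_i=0\}$. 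Your induction on $\au$ works purely from the recursive definition \eqref{eq:anova:term}, replacing the binomial cancellations by the inductive hypothesis; the orthogonality fact you isolate as the ``reduction step'' ($\fcc{\ell}{f_{\v}}$ vanishes unless $\supp\bm\ell\subset\v$, via $\lrangle{\varphi_{\ell_i}}{\varphi_0}=\delta_{\ell_i,0}$) is exactly the mechanism behind the paper's $\delta_{\k_{\u\setminus\v},\b 0}$ factor, so the two proofs rest on the same structural facts but organize the combinatorics differently. What each buys: the paper's argument is shorter but silently assumes the direct formula for $f_{\u}$ (itself a lattice-inversion fact requiring proof, or a citation); yours is self-contained relative to the definitions in the paper, at the cost of the bookkeeping you correctly flag in the mixed case --- checking that $\v=\bm w=\supp\bm\ell$ is the unique survivor and that its coefficient is the same $\fc{k}$. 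Two further merits of your write-up: you justify $\varphi_0\equiv 1$ explicitly (the paper uses $\varphi_{\bm\ell}(\x_{\u})=\varphi_{\k}(\x)$ without comment, and this needs $\int_{-1}^1\tilde\omega(x)\d x=1$), and your Jensen/Cauchy--Schwarz bound $\norm{\mathrm{P}_{\u}f}{\Lomu}\le\norm{f}{\Lomd}$ gives slightly more information than the paper's appeal to Parseval, though both suffice for the $\Lomu$-membership claims.
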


\begin{proof}
	We prove the formula for $\fcc{\ell}{\mathrm{P}_{\u} f}$, consolidate the two integrals and derive 
	\begin{align*}
	\fcc{\ell}{\mathrm{P}_{\u} f} &= \int_{\TI^{\au}} \int_{\TI^{\auc}}
	  f(\bm x) \omega(\bm x_{\uc}) \d\bm{x}_{\uc}\,
	\varphi_{\bm \ell}(\bm{x}_{\bm u})
	\omega(\bm x_{\u})
	\d\bm{x}_{\u} \\
	&= \int_{\TI^d}  f(\bm x)\,
	\varphi_{\bm \ell}(\bm{x}_{\bm u})
	\omega(\bm x) \d\bm{x} \\
	&= \int_{\TI^d} 	  f(\bm x)\,
	\varphi_{\bm \k}(\bm{x}) \omega(\bm x)
	\d\bm{x} = \fc{k}
	\end{align*}
	for $\bm k \in \N_0^d$ with $\bm{k}_{\u} = \bm\ell$ and $\bm{k}_{\uc} = \bm 0$. Then $\mathrm{P}_{\u} f \in \Lomu$ is clear due to Parseval's identity. 
	
	In order to prove the formula for $\fcc{\ell}{f_{\u}}$, we employ the direct formula for the ANOVA terms $f_{\u}(\x_{\u}) = \sum_{\v \subset \u} (-1)^{\au-\av} \mathrm{P}_{\v} f (\x_{\v})$ to obtain
	\begin{align*}
	\fcc{\ell}{f_{\u}} &= \int_{\T^{\au}} f_{\u}(\x_{\u}) \phi_{\bm\ell}(\x_{\u}) \omega(\bm x_{\u}) \d\x_{\u} \\
	&= \int_{\T^{\au}} \left[\sum_{\v \subset \u} (-1)^{\au-\av} \mathrm{P}_{\v} f (\x_{\v})\right] \phi_{\bm\ell}(\x_{\u}) \omega(\bm x_{\u}) \d\x_{\u} \\
	&= \sum_{\v \subset \u} (-1)^{\au-\av} \int_{\T^{\au}} \mathrm{P}_{\v} f (\x_{\v}) \phi_{\bm\ell}(\x_{\u}) \omega(\bm x_{\u})  \d\x_{\u} \\
	&= \sum_{\v \subset \u} (-1)^{\au-\av} \mathrm{c}_{\k_{\v}}\left(\mathrm{P}_{\v}f\right) \delta_{\k_{\u\setminus\v},\b 0}.
	\end{align*}
	We go on to prove $\fcc{0}{f_{\u} } = \delta_{\u,\emptyset}\cdot\fc{0}$. In this case, $\k_{\v} = \b 0$ and $\delta_{\k_{\u\setminus\v},\b 0} = 1$ for every $\v \subset \u$. By the Binomial Theorem, we have
	\begin{align*}
	\fcc{\ell}{f_{\u}} &= \sum_{\v \subset \u} (-1)^{\au-\av} \mathrm{c}_{\k_{\v}}\left(\mathrm{P}_{\v}f\right) \delta_{\k_{\u\setminus\v},\b 0} = \fc{0} \sum_{\v \subset \u} (-1)^{\au-\av} \\
	&= \fc{0} \sum_{n=0}^{\au} \binom{\au}{n} (-1)^{\au-n} = \fc{0}\cdot\delta_{\u,\emptyset}.
	\end{align*}
	For the second case, we consider an $\b\ell$ and with a set $\overline{\v} \subset \u$ such that $\emptyset\neq\overline{\v} \coloneqq \{ i \in \u \colon k_i = 0 \}\neq\u$. Then $\delta_{\k_{\u\setminus\v},\b 0} = 1 \Longleftrightarrow \overline{\v}^\mathrm{c} \coloneqq \u\setminus\overline{\v} \subset \v$ and with the Binomial Theorem we get
	\begin{align*}
	\fcc{\ell}{f_{\u}} &= \sum_{\v \subset \u} (-1)^{\au-\av} \mathrm{c}_{\k_{\v}}\left(\mathrm{P}_{\v}f\right) \delta_{\k_{\u\setminus\v},\b 0} = \sum_{\overline{\v}^\mathrm{c} \subset \v \subset \u} (-1)^{\au-\av} \mathrm{c}_{\k_{\v}}\left(\mathrm{P}_{\v}f\right) \\
	&= \fc{\k} \sum_{\overline{\v}^\mathrm{c} \subset \v \subset \u} (-1)^{\au-\av} = \fc{\k} \sum_{n=\abs{\overline{\v}^\mathrm{c}}}^{\au} \binom{\au-\abs{\overline{\v}^\mathrm{c}}}{n-\abs{\overline{\v}^\mathrm{c}}} (-1)^{\au-n} \\
	&= \fc{\k} \sum_{m=0}^{\au-\abs{\overline{\v}^\mathrm{c}}} \binom{\au-\abs{\overline{\v}^\mathrm{c}}}{m} (-1)^{\au-\abs{\overline{\v}^\mathrm{c}}-m} = 0.
	\end{align*}
	For the case where the entries of $\b\ell$ are all nonzero, only the addend where $\v = \u$ is nonzero, i.e., $\fcc{\ell}{f_{\u}} = \fc{k}$ and $f_{\u} \in \Lomu$ is clear due to Parseval's identity.
\end{proof} 

Using Lemma \ref{lemma:anova:projections}, we are able to write $\mathrm{P}_{\u} f$ and $f_{\u}$ as both, $d$-dimensional  
$$
\mathrm{P}_{\u} f (\bm x) = \sum_{\bm k \in \Pud} \fc{k} \, \phi_{\k}(\x), \,\,f_{\u}(\bm x) = \sum_{\bm k \in \Fud} \fc{k} \, \phi_{\k}(\x)
$$ 
and $\au$-dimensional series 
$$
\mathrm{P}_{\u} f (\bm{x}_{\u}) = \sum_{\bm \ell \in \N_0^{\au}} \fcc{\ell}{\mathrm{P}_{\u} f} \, \phi_{\bm\ell}(\x_{\u}), 
\,\, f_{\u}  (\bm{x}_{\u}) = \sum_{\bm \ell \in \N^{\au}} \fcc{\ell}{f_{\u} } \, \phi_{\bm\ell}(\x_{\u}).
$$ This directly implies that $\langle f_{\u} , f_{\v} \rangle = 0$ for $\u \neq \v$. With the ANOVA terms we are able to introduce the ANOVA decomposition.
\begin{theorem}\label{Th1}
	Let $f \in \Lomd$, the ANOVA terms $f_{\u}$ as in \eqref{eq:anova:term} and the set of coordinate indices $\D = \{1,2,\dots,d\}$. Then f can be uniquely decomposed as
	\begin{equation}\label{eq:ANOVA:decomp}
	f(\b x) = f_\emptyset + \sum_{i=1}^{d} f_{ \{i\} } (x_i) + \sum_{i = 1}^{d-1} \sum_{j = i+1}^{d} f_{ \{i,j\} } (\b{x}_{\{i,j\}})+ \dots + f_\D(\x) = \sum_{\u \subseteq \D} f_{\b u}(\b{x}_{\u}) 
	\end{equation}
	which we call \textbf{analysis of variance (ANOVA) decomposition}. Moreover, $\bigcup_{\u \subset \D} \Fud = \N_0^d$ and the union is disjoint.
\end{theorem}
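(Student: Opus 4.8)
The plan is to reduce the whole statement to the partition of $\N_0^d$ by frequency support and then transport that partition through \cref{lemma:anova:projections}. First I would dispose of the index-set claim, on which the decomposition rests. By definition $\Fud = \{\k \in \N_0^d : \supp\k = \u\}$, so the family $\{\Fud\}_{\u \subseteq \D}$ consists exactly of the fibres of the map $\k \mapsto \supp\k$. Every $\k \in \N_0^d$ has a single well-defined support $\supp\k \subseteq \D$ and therefore lies in $\mathbb{F}^{(d)}_{\supp\k}$ and in no other $\Fud$. This one observation delivers both $\bigcup_{\u \subseteq \D}\Fud = \N_0^d$ and the disjointness of the union at once.

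Next I would prove the decomposition itself. \cref{lemma:anova:projections} supplies the $d$-dimensional series $f_{\u}(\x) = \sum_{\k \in \Fud}\fc{k}\,\phi_{\k}(\x)$. Summing over all $\u \subseteq \D$ and using the partition from the previous step to merge the $2^d$ inner sums into a single sum over $\N_0^d$, I obtain
\[ \sum_{\u \subseteq \D} f_{\u}(\x) = \sum_{\k \in \N_0^d}\fc{k}\,\phi_{\k}(\x) = f(\x) \]
in $\Lomd$; the regrouping is a finite rearrangement of one $L_2$-convergent series and is hence harmless. The fully expanded form in \eqref{eq:ANOVA:decomp}, organised by the order $\au = 0,1,2,\dots$, is just this sum re-indexed by the cardinality of $\u$.

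For uniqueness I would characterise the terms intrinsically and show the characterisation forces them to agree. Suppose $f = \sum_{\u \subseteq \D} g_{\u}$ with each $g_{\u} \in \Lomu$ and $\mathrm{P}_{\v}g_{\u} = 0$ for every $\v \subsetneq \u$. I would translate these two conditions into the frequency domain: that $g_{\u}$ depends only on $\x_{\u}$ forces $\fcc{k}{g_{\u}} = 0$ off $\Pud$, i.e.\ whenever $\k_{\uc} \neq \b 0$; and $\mathrm{P}_{\u\setminus\{i\}}g_{\u} = 0$ for each $i \in \u$ annihilates every coefficient with $k_i = 0$, because $\int_{-1}^1\phi_{k_i}(x_i)\,\tilde\omega(x_i)\d x_i = \delta_{k_i,0}$. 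Together these confine the coefficients of $g_{\u}$ to $\Fud$. Since the $\Fud$ partition $\N_0^d$, comparing coefficients of $\sum_{\u}g_{\u}$ with the unique basis expansion of $f$ yields $\fcc{k}{g_{\u}} = \fc{k}$ for $\k \in \Fud$, whence $g_{\u} = f_{\u}$.

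The bookkeeping in the first two steps is routine once the support partition is in hand; I expect the only delicate point to be the uniqueness step, namely pinning down precisely which annihilation conditions characterise an ANOVA term and checking that they translate exactly into the constraint $\supp\k = \u$. A shorter but less self-contained route would instead invoke the orthogonality $\langle f_{\u}, f_{\v}\rangle = 0$ for $\u \neq \v$ noted above together with the defining recursion \eqref{eq:anova:term}.
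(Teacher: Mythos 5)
Your proof is correct and, for the decomposition identity itself, is exactly the paper's argument: both rest on the observation that the sets $\Fud$, $\u \subseteq \D$, partition $\N_0^d$ according to frequency support, and both sum the series representations $f_{\u}(\x) = \sum_{\k \in \Fud} \fc{k}\,\phi_{\k}(\x)$ supplied by \cref{lemma:anova:projections} and merge them over this partition into the full expansion of $f$. The one place you go beyond the paper is uniqueness: the paper dispatches it with the single sentence ``since the union is disjoint, the decomposition is unique,'' whereas you make the claim precise by fixing the admissible class of decompositions (components $g_{\u} \in \Lomu$ with $\mathrm{P}_{\v} g_{\u} = 0$ for all $\v \subsetneq \u$), showing that these annihilation conditions confine the coefficients of $g_{\u}$ to $\Fud$, and then comparing basis coefficients --- a correct and more self-contained treatment of the standard intrinsic characterisation that the paper leaves implicit.
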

\begin{proof}
	We use that $\N_0^d$ is clearly the disjoint union of the sets $\Fud$ for $\u \subset \D$. With this fact we obtain
	\begin{align*}
	\sum_{\u \subset \D} f_{\u}( \x_{\u} ) &= \sum_{\u \subset \D} \sum_{\bm k \in \Fud} \fc{k} \, \varphi_{\k} (\x) = \sum_{\bm k \in \bigcup_{\u \subseteq \D} \Fud} \fc{k} \, \varphi_{\k} (\x) \\
	&= \sum_{\bm k \in \N^d_0} \fc{k} \, \varphi_{\k} (\x)  = f(\b x).
	\end{align*}
	Since the union is disjoint, the decomposition is unique.
\end{proof}

In order to get a notion of the importance of single terms compared to the entire function, we define the \textbf{variance of a function}
\begin{equation*}
\va{f} \coloneqq \int_{\TI^d} \left( f(\x) - \fc{0} \right)^2 \omega(\x) \,\d\x
\end{equation*}
and the equivalent formulation
\begin{equation}\label{eq:gsi:formula}
\va{f} = \norm{f}{\Lomd}^2 - \abs{\fc{0}}^2\, .
\end{equation}
For the ANOVA terms $f_{\u}$ with $\emptyset \neq \u \subset \D$, we have $\fcc{0}{f_{\u}} = 0$ and therefore
$\va{f_{\u}} = \norm{f_{\u}}{\Lomu}^2$.
For $f \in \Lomd$ we obtain the property
	\begin{equation*}
	\va{f} = \sum_{\emptyset\neq\u\subset\D} \va{f_{\u}}
	\end{equation*}
for the variance by Parsevel's identity. In order to measure the importance of a term $f_{\u}$ in relation to the function, we use global sensitivity indices, cf. \cite{So90, So01, LiOw06},
\begin{equation}\label{eq:anova:gsi}
\gsi{\u}{f} \coloneqq \frac{\va{f_{\u}}}{\va{f}} \in [0,1]
\end{equation}
for $\emptyset\neq\u\subset\D$. They have the property $\sum_{\emptyset\neq\u\subset\D} \gsi{\u}{f} = 1$.

The global sensitivity indices motivate the notion of \textbf{effective dimensions} as proposed in \cite{CaMoOw97}. Given a fixed $\delta \in [0,1]$, the \textbf{superposition dimension}, one notion of effective dimension, is defined as 
\begin{equation}\label{eq:anova:superposition}
\min \left\{ s \in \D \colon \sum_{\substack{\emptyset\neq\u \subset \D \\ \au \leq s}} \va{f_{\u}} \geq \delta \va{f}  \right\}
\end{equation} for accuracy $\delta$. In other words, the proportion $\delta$ of the variance $\va{f}$ is explained by ANOVA terms of order less or equal to the superposition dimension.

The number of ANOVA terms in a full decomposition is $\abs{\mathcal{P}(\D)} = 2^d$ and therefore grows exponentially in $d$. This reflects the curse of dimensionality and poses a problem in high-dimensional approximation. In order to circumvent that, we make use of sparsity in the ANOVA decomposition. Specifically, we focus on truncating the ANOVA decomposition, i.e., removing certain terms $f_{\u}$. We therefore define a \textbf{subset of ANOVA terms} as a subset of the power set of $\D$, i.e., $U \subset \mathcal{P}(D)$, such that it is downward closed, i.e., the inclusion condition
\begin{equation}\label{eq:trunc_inclusion}
\u \in U \Longrightarrow \forall \v\subset U \colon \v \in U
\end{equation}
holds, cf.\ \cite[Chapter 3.2]{Holtz11}. This fits with the recursive definition of the ANOVA terms, see \eqref{eq:anova:term}. 
For any subset of ANOVA terms $U$ we then define the \textbf{truncated ANOVA decomposition} as 
\begin{equation*}
\mathrm{T}_{U} f \coloneqq \sum_{\u\in U} f_{\u}.
\end{equation*}
This truncation can be done with the superposition concept in mind, cf.~\eqref{eq:anova:superposition}. For a superposition threshold $d_s \in \D$ we define $U_{d_s} \coloneqq \{ \u \subset \D \colon \abs{\u} \leq d_s \}$ and $\mathrm{T}_{d_s} \coloneqq \mathrm{T}_{U_{d_s}}$. This reduces the number of ANOVA terms to grow polynomially in $d$ for fixed $d_s$ since \begin{equation}\label{polyterms}
\abs{U_{d_s}} \leq \left(\frac{d \cdot \e}{d_s}\right)^{d_s},
\end{equation} cf. \cite{PoSchm19a}. The basis coefficients of the truncated ANOVA decomposition are then
\begin{equation*}
	\fcc{\k}{\mathrm{T}_U f} = \begin{cases}
	\fc{\k} \,\,\,&\colon \exists \u \in U \colon \k \in \Fud \\
	0 &\colon \text{otherwise}.
	\end{cases}
\end{equation*} which means that $\fcc{\k}{\mathrm{T}_{d_s} f}$ is nonzero only for at most $d_s$-sparse frequencies.

The approximation method introduced in \Cref{sec:approx} uses partial sums where the frequency index sets have a grouped structure related to the ANOVA terms in a set $U \subset \mathcal{P}(\D)$. Every finite index set $\I_{\u} \subset \N^d$ corresponds to one ANOVA term $f_{\u}$, $\u \in U$, i.e., \begin{equation*}
	\I_{\u} \subset \left\{ \k \in \N^d \colon \supp \k = \u \right\},
\end{equation*} with $\I_{\emptyset} = \{\b 0\}$ and for the disjoint union we have \begin{equation}\label{freq:union}
	\mathcal{I}(U) \coloneqq \bigcup_{\u \in U} I_{\u} \subset \N_0^d.
\end{equation} It is also possible to choose the frequencies only based on the order of the ANOVA term $\au$, i.e., we have for the projections \begin{equation*}
	\{ \k_{\u} \in \N^{\au} \colon \k \in \I_{\u} \} = \{ \b\ell_{\v} \in \N^{\av}  \colon \b\ell \in \I_{\v} \}
\end{equation*} for every pair of sets $\u, \v \subset \D$ with $\abs{\v} = \abs{\u}$.

\section{Approximation Method}\label{sec:approx}

In this section, we present a method for the approximation of functions $\fun{f}{[-1,1]^d}{\C}$ with a high spatial dimension $d \in \N$ such that $f \in \Lomd$. In scattered data approximation, the data consists of a finite set of sampling nodes $\X = \{ \x_1, \x_2, \dots, \x_M \} \subset [-1,1]^d$ and a vector of values $\b y \in \R^M$. Now, we assume that $y_i \approx f(\x_i)$, i.e., the entries of $\b y$ are noisy evaluations of the function. Here, it is especially important that we cannot choose the location of the nodes $\x_i$. The space $\Lomd$ and the corresponding complete orthonormal system $\{\phi_{\k}\}_{\k\in\N_0^d}$ is fixed. Moreover, we focus on functions with a low-dimensional structure, i.e., a low superposition dimension, cf.~\eqref{eq:anova:superposition}. This implies that choosing a low threshold $d_s \in \D$ will yield a good approximation $ \mathrm{T}_{d_s} f (\b x)\approx f(\b x)$. It has been speculated that functions in many applications consist of a low-dimensional structure and therefore belong to our class. This is referred to as sparsity-of-effects or the Pareto principle, see e.g.~\cite{CaMoOw97, GrKuSl10, Wu2011}. From a theoretical standpoint, functions of specific smoothness classes also have a low-dimensional structure. In \cite{PoSchm19a} it was shown that functions of certain isotropic and dominating-mixed smoothness belong to this class. In particular, a POD (or \textit{product and order-dependent}) weight structure was considered which is motivated by the application of quasi-Monte Carlo methods for PDEs with random coefficients, cf.\ \cite{KuSchwSl12, Graham2014, Kuo2016, Graham2018}.

The idea of the method is to exploit sparsity in the ANOVA decomposition by considering only terms up to order $d_s$, i.e., $\mathrm{T}_{d_s} f$. The immediate benefit is that the number of terms is reduced from being exponential in the spatial dimension $d$ to being polynomial, see \eqref{polyterms}. This assumption also provides us with a way of efficiently calculating an initial least-squares approximation on the basis coefficients. From there we focus on understanding the structure of the function regarding the importance of dimensions and dimension interactions, i.e., the importance of ANOVA terms $f_{\u}$. We measure the importance of a term $f_{\u}$ using the global sensitivity indices $\gsi{\u}{f}$, see~\eqref{eq:anova:gsi}. In order to reduce the number of basis coefficients and subsequently the model complexity further, we use this knowledge to reduce the number of involved ANOVA terms to certain subset $U \subset U_{d_s}$. This simplifies our model function and reduces effects of overfitting.

In \cref{sec:approx:lsqr} we consider how to obtain an approximation on the function given a set of ANOVA terms $U \subset \mathcal{P}(\D)$. This may be the set $U_{d_s}$ for the initial approximation with threshold $d_s \in \D$ or an active set $U \subset U_{d_s}$. The detection of the active set will be addressed in \cref{sec:approx:asd}.

\subsection{Least Squares and Grouped Transformations}\label{sec:approx:lsqr}

In this section we explain the optimization problem for obtaining an approximation on the basis coefficients $\fc{\k}$ of our function $f$ given a set of terms $U \subset \mathcal{P}(\D)$. Here, $U = U_{d_s}$ for the initial approximation and $U \subset U_{d_s}$ after the active set detection. We focus on the Chebyshev system \begin{equation*}
	T_{\k}(\x) \coloneqq \sqrt{2}^{\norm{\k}{0}} \prod_{s \in \supp \k} \cos(k_s \arccos x_s)
\end{equation*} which is a complete orthonormal system in $\Lomd$ with the Chebyshev density \begin{equation}\label{cheb:dens}
	\omega(\x) = \prod_{s \in \D} \frac{1}{\pi\sqrt{1-x_s^2}}
\end{equation} since we may use the FPT to compute the Chebyshev coefficients from other polynomials, see \cref{sec:int}. Now, we approximate $f$ by a finite partial sum $S(\I(U)) f (\x)$, see \eqref{eq:pre:fps}, with corresponding index set $\I(U)$ of a grouped structure, cf.~\eqref{freq:union} for a superposition threshold $d_s \in \D$. The index set for every ANOVA term $f_{\u}$, $\u \in U\setminus\emptyset$, is given by \begin{equation}\label{groupedSets}
	 \I_{\u} = \{  \k \in \N^d \colon \supp \k = \u,  \norm{\k_{\u}}{\infty} \leq N_{\u} - 1 \}
\end{equation} with parameters $N_{\u} \in \N$ and $\I_{\emptyset} = \{\b 0\}$. We then have \begin{equation}\label{eq:system1}
	f(\b x) = \sum_{\bm k \in \N^d_0} \fc{k} \, \phi_{\k} (\x) \approx \mathrm{T}_U f (\x) \approx S(\I(U)) f (\b x) = \sum_{\bm k \in \I(U)} \fc{k} \, \phi_{\k} (\x).
\end{equation} The coefficients $\fc{k}$ are unknown and have to be determined from the given scattered data $\X$ and $\b y$. 

Here, we distinguish between two different cases. The first case being that the nodes $\X$ are distributed i.i.d.~according to the Chebyshev probability density $\omega$ in \eqref{cheb:dens} and the second case being that $\X$ is distributed uniformly in $[-1,1]^d$. 

\subsubsection{Chebyshev Distributed Nodes}

Here, we assume that the nodes $\X$ are distributed i.i.d.~according to the Chebyshev probability density $\omega$. We aim to determine approximations for the basis coefficients by solving the minimization problem \begin{equation}\label{min_problem}
	\bfh_{\text{sol}} = \argmin_{\bfh \in \R^{\abs{\I(U)}}} \norm{\b y - \F(\X,\I(U))\bfh}{2}^2
\end{equation} with system matrix $\F(\X,\I(U)) = (\phi_{\k} (\x))_{\x\in \X,\k \in \I(U)}$. Solving the problem \eqref{min_problem} is equivalent to solving the normal equation \begin{equation}\label{normaleq}
	\F^\top(\X,\I(U)) \F(\X,\I(U)) \bfh_{\text{sol}} = \F^\top(\X,\I(U)) \b y.
\end{equation} The properties of this system have been considered in \cite{KaUlVo19}. To summarize, we get from \cite[Section 5]{KaUlVo19} that the expected value of the matrix product $\F^\top(\X,\I(U)) \F(\X,\I(U))$ is a diagonal matrix and the singular values of $ \F(\X,\I(U))$ are between $\sqrt{\abs{\X}/2}$ and $\sqrt{3\abs{\X}/2}$. This yields an upper bound for the norm of the Moore-Penrose inverse \begin{equation*}
	\norm{\left(\F^\top(\X,\I(U)) \F(\X,\I(U))\right)^{-1} \F^\top(\X,\I(U)) }{2} < \sqrt{\frac{\abs{\X}}{2}}.
\end{equation*} This holds with a probability of $1-\delta$ if \begin{equation*}
	\abs{I(U)} \leq \frac{1}{2^{d_s}\cdot48(\sqrt{2}-\log\delta) }\cdot\frac{\abs{X}}{\log(2\abs{\X})}.
\end{equation*} We also gain that the matrix $\F(\X,\I(U)$ has full rank and our problem a unique solution $\bfh_{\text{sol}}$ with this probability.

Problem \eqref{min_problem} is difficult to solve in general since we have a large matrix and need an efficient matrix-vector multiplication. However, we have a grouped index set $\I(U)$ which allow us to use the Grouped Transformations idea from \cite{BaPoSc}. If we have an order $\u_1, \u_2, \dots, \u_n$, $n = \abs{U}$, on the ANOVA terms, we may write \begin{equation*}
	\F(\X,\I(U))= \left( \b{F}_1 \,\, \b{F}_2 \,\, \cdots \,\, \b{F}_n \right)
\end{equation*} with $\F_i = (\phi_{\k} (\x))_{\x\in \X,\k \in \I_{\u_i}}$. Therefore, a multiplication of $\F(\X,\I(U))$ with the vector $\bfh = ( \bfh_1, \bfh_2, \dots, \bfh_n ) \in \R^{\abs{\I(U)}}$ can be written as $$ \F(\X,\I(U)) \bfh = \sum_{j=1}^n \b{F}_j \bfh_j. $$ An efficient way of performing the matrix-vector multiplication $\b{F}_j \bfh_j$ can then be realized using a NFCT, see \cref{sec:int}. The same holds true for the adjoint problem, i.e., the multiplication of $\F^\top(\X,\I(U))$ with a vector $\b{f} \in \R^{\abs{\X}}$. Here, we have $$ \F^\top(\X,\I(U)) \b{f} = \begin{pmatrix}
	\b{F}^\top_1 \b{f}\\
	\b{F}^\top_2 \b{f} \\
	\vdots \\
	\b{F}^\top_n \b{f} 
\end{pmatrix}. $$ In this case we can use an adjoint NFCT for efficient multiplications $\b{F}^\top_j\b{f}$. We then proceed to solve \eqref{min_problem} using iterative LSQR, see \cite{PaSa82}, in a matrix-free variant, i.e., $\F(\X,\I(U))$ is not explicitly required by providing the fast grouped transformations algorithm for multiplication of vectors with matrices of type $\F(\X,\I(U))$ and $\F^\top(\X,\I(U))$.

\begin{remark}
	The multiplications $\b{F}_j \bfh_j$ and $\b{F}^\top_j \b{f}$, $j = 1,2,\dots,n$, are all independent of each other which allows us to use parallelization for a fast multiplication. If the computer allows for it, it is possible to calculate all $n$ products and $n$ adjoint products simultaneously.
\end{remark}

The elements of the solution vector $\bfh_{\text{sol}} = (\hat{f}_{\k})_{\k \in \I(U)}$ are the unique least-squares approximation to the basis coefficients, i.e., $\hat{f}_{\k} \approx \fc{k}$, with respect to $\X$ and $\b y$. We then have an approximation by the approximate partial sum \begin{equation}\label{approx_fps}
    S(\X, \I(U)) f (\x) \coloneqq \sum_{\bm k \in \I(U)} \hat{f}_{\k} \, \phi_{\k} (\x) \approx S(\I(U))f(\x).
\end{equation}

\subsubsection{Uniformly Distributed Nodes}\label{sec:uninodes}

In this section we assume that the nodes $\X$ are uniformly i.i.d~distributed in $[-1,1]^d$. If we would proceed in the same way as before, the expected value of the matrix $\F^\top(\X,\I(U)) \F(\X,\I(U))$ from the normal equation \eqref{normaleq} would not be the identity. In other words, our system would not be stable. However, this can be fixed by scaling and preconditioning. 

We choose a padding parameter $\theta \in (0,1)$ and scale the nodes $\X$ such that $$\tilde{\X} \coloneqq \left\{  \begin{pmatrix} (1-\theta) \, x_1 \\ \vdots \\ (1-\theta) \, x_d \end{pmatrix} \colon \x = \begin{pmatrix} x_1 \\ \vdots \\ x_d \end{pmatrix} \in \X \right\},$$i.e., we have uniformly distributed nodes in $[-1+\theta,1-\theta]^d$. Now, we choose our preconditioner as the diagonal matrix \begin{equation*}
	\b W = \mathrm{diag}\left( \sqrt{\omega(\x)} \right)_{\x \in \X} 
\end{equation*} such that we have the minimization problem \begin{equation*}
	\bfh_{\text{sol}} = \argmin_{\bfh \in \R^{\abs{\I(U)}}} \norm{\b W \b y - \b W \F(\X,\I(U))\bfh}{2}^2.
\end{equation*} The normal equation \eqref{normaleq} transforms to \begin{equation}\label{min_problem_uni}
	\F^\top(\X,\I(U)) \b{W}^2 \F(\X,\I(U)) \bfh_{\text{sol}} = \F^\top(\X,\I(U)) \b{W} \b y
\end{equation} with $\b{W}^2 = \b W \cdot \b W$. We denote \begin{equation}\label{matrixH}
	\b H \coloneqq \F^\top(\X,\I(U)) \b{W}^2 \F(\X,\I(U)).
\end{equation}In the following we consider the properties of this system and prove that with this preconditioner we are able to achieve a stable system under certain conditions.

\begin{lemma}\label{lem:dis:1}
	Let $\k, \b\ell \in \N_0^d$, $\k \neq \b\ell$, with $\norm{\k}{0}, \norm{\b\ell}{0} \leq d_s \in \D$ and $\vartheta \in (0,1)$. Then \begin{equation*}
		\abs{\int_{[-1,-1+\vartheta]^d} T_{\k}(\x) T_{\b\ell}(\x) \omega(\x) \d\x} \leq 4^{d_s} \left( \frac{\arccos(1-\vartheta)}{\pi} \right)^d
	\end{equation*} and \begin{equation*}
		\abs{\int_{[1-\vartheta,1]^d} T_{\k}(\x) T_{\b\ell}(\x) \omega(\x) \d\x} \leq 4^{d_s} \left( \frac{\arccos(1-\vartheta)}{\pi} \right)^d.
	\end{equation*}
\end{lemma}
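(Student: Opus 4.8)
The plan is to reduce the $d$-dimensional integral to a product of one-dimensional integrals and estimate each factor separately. Since the normalized Chebyshev polynomials satisfy $T_{\k}(\x)=\sqrt2^{\,\norm{\k}{0}}\prod_{s\in\supp\k}\cos(k_s\arccos x_s)$ (and likewise for $T_{\b\ell}$) and the density \eqref{cheb:dens} factorizes as $\omega(\x)=\prod_{j=1}^d(\pi\sqrt{1-x_j^2})^{-1}$, the integrand is a genuine tensor product over the coordinates. Consequently,
\[
\int_{[-1,-1+\vartheta]^d} T_{\k}(\x)\,T_{\b\ell}(\x)\,\omega(\x)\d\x
= \prod_{j=1}^d \int_{-1}^{-1+\vartheta} T_{k_j}(x)\,T_{\ell_j}(x)\,\frac{1}{\pi\sqrt{1-x^2}}\d x,
\]
so it suffices to bound each one-dimensional factor.

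First I would treat a single factor via the substitution $x=\cos t$, which turns $\tfrac{1}{\pi\sqrt{1-x^2}}\d x$ into $-\tfrac1\pi\d t$ and gives $T_{k_j}(\cos t)=\sqrt2^{\,1-\delta_{k_j,0}}\cos(k_j t)$. As $x$ runs from $-1$ to $-1+\vartheta$, the value $t=\arccos x$ runs from $\pi$ down to $\arccos(-1+\vartheta)=\pi-\arccos(1-\vartheta)$, so the image is an interval of length exactly $\arccos(1-\vartheta)$. Hence the $j$-th factor equals $\tfrac1\pi\sqrt2^{\,(1-\delta_{k_j,0})+(1-\delta_{\ell_j,0})}\int_{\pi-\arccos(1-\vartheta)}^{\pi}\cos(k_jt)\cos(\ell_jt)\d t$, and the crude estimate $\abs{\cos(k_jt)\cos(\ell_jt)}\le1$ bounds its modulus by $\sqrt2^{\,(1-\delta_{k_j,0})+(1-\delta_{\ell_j,0})}\cdot\frac{\arccos(1-\vartheta)}{\pi}$.

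Collecting the $d$ factors is then pure bookkeeping. Each coordinate contributes one factor $\frac{\arccos(1-\vartheta)}{\pi}$, producing $\left(\frac{\arccos(1-\vartheta)}{\pi}\right)^d$, while the normalization constants multiply to $\sqrt2^{\,\norm{\k}{0}+\norm{\b\ell}{0}}$. Using $\norm{\k}{0},\norm{\b\ell}{0}\le d_s$ this is at most $\sqrt2^{\,2d_s}=2^{d_s}\le 4^{d_s}$, which yields the asserted bound. For the integral over $[1-\vartheta,1]^d$ the argument is identical: there the same substitution sends $x\in[1-\vartheta,1]$ to $t=\arccos x\in[0,\arccos(1-\vartheta)]$, once more an interval of length $\arccos(1-\vartheta)$, so every estimate carries over verbatim.

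I do not expect a genuine obstacle here; the whole statement is elementary once one spots the factorization and the $x=\cos t$ substitution. The only points requiring care are the bookkeeping of the $\sqrt2$ normalization factors and correctly locating the image of the integration interval under $\arccos$ near the endpoint $x=-1$, where $\arccos$ tends to $\pi$. It is worth noting that the hypothesis $\k\neq\b\ell$ is never used: the bound holds for all admissible frequencies and is driven entirely by the smallness of the interval length $\arccos(1-\vartheta)$ (which vanishes as $\vartheta\to0$) rather than by any cancellation between $T_{\k}$ and $T_{\b\ell}$.
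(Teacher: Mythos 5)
Your proof is correct, and it takes a cleaner route than the paper's. Both arguments rest on the same tensorization of the integral into one-dimensional factors, but they differ in how each factor is estimated. The paper partitions the coordinates into three classes (both frequencies nonzero, exactly one nonzero, both zero), evaluates each one-dimensional integral explicitly via antiderivatives after a product-to-sum step, and bounds the resulting sine terms; collecting the per-class constants $2$, $\sqrt2$, $1$ yields $2^{\abs{M_1}}\sqrt{2}^{\abs{M_2}}\le 4^{d_s}$. You instead substitute $x=\cos t$ and bound the integrand by $1$, which delivers the same per-factor bound (normalization times $\arccos(1-\vartheta)/\pi$) with no case analysis and no integration. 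Your route buys three things: (i) a sharper constant, $\sqrt{2}^{\,\norm{\k}{0}+\norm{\b\ell}{0}}\le 2^{d_s}$ rather than $4^{d_s}$; (ii) as you observe, the hypothesis $\k\neq\b\ell$ is never used, so your argument simultaneously proves \cref{lem:dis:3} with exactly its stated constant $2^{d_s}$; (iii) it sidesteps a small defect in the paper's computation: the antiderivative used for the paper's $I_1$ contains the term $\sin((k_s-\ell_s)\arccos x)/(\pi(k_s-\ell_s))$, which is ill-defined when $k_s=\ell_s\neq 0$ --- a situation that can occur in some coordinate even though $\k\neq\b\ell$ --- whereas your uniform bound requires no such distinction. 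What the paper's exact integration could buy in principle is cancellation, but since it ultimately estimates $\abs{\sin(m\rho)}\le m\rho$, it recovers only the same factor $\arccos(1-\vartheta)/\pi$ per coordinate, so nothing is lost by your more elementary estimate.
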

\begin{proof}
	We define $C(k,x) = \cos(k\arccos(x))$ and set $M_1(\k,\b\ell) = \{ s \in \D \colon k_s \neq 0, \ell_s \neq 0 \}$, $M_2(\k,\b\ell) = \{ s \in \D \colon \text{either } k_s = 0 \text{ or } \ell_s = 0 \}$, $M_3(\k,\b\ell) = \{s\in\D\colon k_s=\ell_s=0\}$ . The first integral can be separated as follows \begin{align*}
		\int_{[-1,-1+\theta]^d} T_{\k}(\x) T_{\b\ell}(\x) \omega(\x) \d\x = \prod_{s \in M_1 } \underbrace{\frac{2}{\pi} \int_{-1}^{-1+\theta} C(k_s,x) C(\ell_s,x) \frac{1}{\sqrt{1-x^2}} \d x}_{I_1} \\
		\times \prod_{ s \in M_2 } \underbrace{\frac{\sqrt{2}}{\pi} \int_{-1}^{-1+\theta} C(\max\{k_s,\ell_s\},x) \frac{1}{\sqrt{1-x^2}} \d x}_{I_2} \\
		\times \prod_{ s \in M_3 } \underbrace{\frac{1}{\pi} \int_{-1}^{-1+\theta} \frac{1}{\sqrt{1-x^2}} \d x}_{I_3}.
	\end{align*} We have \begin{equation*}
		I_1 = - \left[ \frac{\sin((k_s - \ell_s) \arccos(x))}{\pi (k_s - \ell_s)}  +  \frac{\sin((k_s + \ell_s) \arccos(x))}{\pi (k_s + \ell_s)} \right]_{-1}^{-1+\vartheta}
	\end{equation*} and since $\arccos(-1) = \pi$ this becomes \begin{equation*}
		I_1 = - \frac{\sin((k_s - \ell_s) \arccos(-1+\vartheta))}{\pi (k_s - \ell_s)}  -  \frac{\sin((k_s + \ell_s) \arccos(-1+\vartheta))}{\pi (k_s + \ell_s)}.
	\end{equation*} Writing $\arccos(-1+\vartheta) = \pi - \rho$ yields \begin{equation*}
		\abs{\sin((k_s - \ell_s)  (\pi - \rho))} = \abs{\sin((k_s - \ell_s)\rho)} \leq \abs{(k_s - \ell_s)\rho}.
	\end{equation*} Therefore, we get \begin{equation*}
		\abs{I_1} \leq \frac{\rho}{\pi} + \frac{\rho}{\pi} = \frac{2\rho}{\pi} = \frac{2\arccos(1-\vartheta)}{\pi}.
	\end{equation*} For the second integral we have w.l.o.g. \begin{equation*}
		I_2 = \left[-\frac{\sqrt{2}\sin(k_s \arccos(x))}{k_s\pi}\right]_{\pi}^{-1+\vartheta} = -\frac{\sqrt{2}\sin(k_s \arccos(-1+\vartheta))}{k_s\pi}
	\end{equation*} and \begin{equation*}
		\abs{I_2} \leq \frac{\sqrt{2}\rho}{\pi}.
	\end{equation*} For the last intergral we deduce \begin{equation*}
		I_3 = \frac{1}{\pi} \left( \arcsin(-1+\vartheta) - \arcsin(-1) \right) = \frac{1}{\pi} \arccos(1-\vartheta).
	\end{equation*} The final result follows by \begin{align}
		2^{\abs{M_1}}\left(\frac{\arccos(1-\vartheta)}{\pi}\right)^{\abs{M_1}} \cdot \sqrt{2}^{\abs{M_2}}\left(\frac{\arccos(1-\vartheta)}{\pi}\right)^{\abs{M_2}}  \\ 
		\cdot \left(\frac{\arccos(1-\vartheta)}{\pi}\right)^{\abs{M_3}} \leq 2^{d_s} \sqrt{2}^{2 d_s}  \left(\frac{\arccos(1-\vartheta)}{\pi}\right)^{d}.
	\end{align} The steps work analogously for the second integral.
\end{proof}

\begin{lemma}\label{lem:dis:3}
	Let $\k \in \N_0^d$ with $\norm{\k}{0} \leq d_s \in \D$ and $\vartheta > 0$. Then \begin{equation*}
		\abs{\int_{[-1,-1+\vartheta]^d} T_{\k}^2(\x) \omega(\x) \d\x} \leq 2^{d_s} \left( \frac{\arccos(1-\vartheta)}{\pi} \right)^d
	\end{equation*} and \begin{equation*}
		\abs{\int_{[1-\theta,1]^d} T_{\k}^2(\x) \omega(\x) \d\x} \leq 2^{d_s} \left( \frac{\arccos(1-\vartheta)}{\pi} \right)^d.
	\end{equation*}
\end{lemma}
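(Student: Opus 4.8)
The plan is to exploit the product structure of both the integrand and the domain, reducing everything to one-dimensional integrals exactly as in the proof of \cref{lem:dis:1}, only now the case $\k=\b\ell$ makes things simpler since no mixed-frequency terms occur.

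First I would note that on $[-1,-1+\vartheta]^d$ the integrand factorises. Writing $C(k,x)=\cos(k\arccos x)$ as before and using
$$T_{\k}^2(\x)\,\omega(\x) = 2^{\norm{\k}{0}}\prod_{s\in\supp\k}C(k_s,x_s)^2\prod_{s\in\D}\frac{1}{\pi\sqrt{1-x_s^2}},$$
I distribute the normalisation $2^{\norm{\k}{0}}$ over the $\norm{\k}{0}$ active coordinates. This turns the integral into a product of $\norm{\k}{0}$ factors
$$J_1 = \frac{2}{\pi}\int_{-1}^{-1+\vartheta} C(k_s,x)^2\,\frac{\d x}{\sqrt{1-x^2}}$$
for $s\in\supp\k$ together with $d-\norm{\k}{0}$ factors
$$J_3 = \frac{1}{\pi}\int_{-1}^{-1+\vartheta}\frac{\d x}{\sqrt{1-x^2}} = \frac{\arccos(1-\vartheta)}{\pi},$$
for $s\notin\supp\k$, where the value of $J_3$ is exactly the integral $I_3$ already computed in \cref{lem:dis:1}.

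Next I would handle $J_1$. Using $\cos^2 t = \tfrac{1}{2}(1+\cos 2t)$ and the substitution $u=\arccos x$ (so that $\d x/\sqrt{1-x^2}=-\d u$ and the limits become $\pi$ and $\pi-\rho$ with $\rho=\arccos(1-\vartheta)$) rewrites $J_1$ as $\frac{1}{\pi}\bigl(\rho + \tfrac{\sin(2k_s\rho)}{2k_s}\bigr)$. The constant part contributes $\rho/\pi$, and, just as for $I_1$ in \cref{lem:dis:1}, the oscillatory part is controlled by $\abs{\sin t}\le\abs{t}$, giving $\tfrac{\abs{\sin(2k_s\rho)}}{2\pi k_s}\le\rho/\pi$. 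Hence $\abs{J_1}\le 2\rho/\pi = 2\arccos(1-\vartheta)/\pi$.

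Finally I would multiply the factor bounds. With $m=\norm{\k}{0}\le d_s$ active coordinates this yields
$$\abs{\int_{[-1,-1+\vartheta]^d} T_{\k}^2(\x)\,\omega(\x)\,\d\x} \le \left(\frac{2\arccos(1-\vartheta)}{\pi}\right)^{m}\left(\frac{\arccos(1-\vartheta)}{\pi}\right)^{d-m} = 2^{m}\left(\frac{\arccos(1-\vartheta)}{\pi}\right)^{d},$$
and since $m\le d_s$ the claimed bound $2^{d_s}\bigl(\arccos(1-\vartheta)/\pi\bigr)^d$ follows. The integral over $[1-\vartheta,1]^d$ is treated identically, now with $\arccos 1 = 0$ and $\arccos(1-\vartheta)=\rho$ supplying the integration limits. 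There is no genuine obstacle beyond bookkeeping; the only point that needs a little care is tracking the normalisation $2^{\norm{\k}{0}}$ so that it pairs off cleanly with the active coordinates and produces the factor $2^{d_s}$ rather than something larger.
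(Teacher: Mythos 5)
Your proof is correct and follows essentially the same route as the paper's: factorise the integral over the product domain into univariate integrals for active and inactive coordinates, bound the active factor by $2\arccos(1-\vartheta)/\pi$ using $\abs{\sin t}\le\abs{t}$, and recycle the inactive factor from \cref{lem:dis:1}. Your explicit tracking of $m=\norm{\k}{0}\le d_s$ is in fact slightly cleaner than the paper's bookkeeping, which writes the product directly with exponents $d_s$ and $d-d_s$.
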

\begin{proof}
	We define $C(k,x) = \cos(k\arccos(x))$ and write the first integral as the product \begin{align*}
		\int_{[-1,-1+\theta]^d} T_{\k}^2(\x) \omega(\x) \d\x = \prod_{s \in \supp \k} \underbrace{\frac{2}{\pi} \int_{-1}^{-1+\theta} C^2(k_s,x) \frac{1}{\sqrt{1-x^2}} \d x}_{I_1} \\
		 \prod_{ s \in \D\setminus\supp\k } \underbrace{\frac{1}{\pi} \int_{-1}^{-1+\theta} \frac{1}{\sqrt{1-x^2}} \d x}_{I_2}.
	\end{align*} We then have \begin{align*}
		I_1 &= -\frac{2}{\pi} \left[ \frac{2k_s\arccos x + \sin(2k_s\arccos x)}{4 k} \right]_{-1}^{-1+\theta} \\ &= 1 - \frac{\arccos(\theta-1)}{\pi} - \frac{\sin(2 k_s \arccos(\theta-1))}{2 \pi k_s}
	\end{align*} and estimate the absolute value \begin{equation*}
		\abs{I_1} \leq \frac{2\arccos(1-\theta)}{\pi}.
	\end{equation*} The integral $I_2$ was considered in the proof of \cref{lem:dis:1} which results in \begin{equation*}
		\left( \frac{2\arccos(1-\theta)}{\pi} \right)^{d_s} \left( \frac{\arccos(1-\theta)}{\pi} \right)^{d-d_s} = 2^{d_s} \left( \frac{\arccos(1-\theta)}{\pi} \right)^d.
	\end{equation*} The steps work analogously for the second integral.
\end{proof}

Using the previous two lemmas, we are able to prove that the expected value of the matrix $\b H$ from \eqref{matrixH} is close to a diagonal matrix.
\begin{theorem}\label{expectedValue}
	Let $\X \subset [-1+\vartheta,1-\vartheta]^d$, $0 < \vartheta < 1$, be a set of uniformly distributed i.i.d.~nodes, $\F(\X,\I(U))$ the basis matrix for the Chebyshev polynomials with respect to an index set $\I(U)$ of type \eqref{groupedSets} such that $U \subset U_{d_s}$ for superposition threshold $d_s \in \D$, and $\b H$ as in \eqref{matrixH}. Then for the entries of the expected value matrix \begin{equation}\label{Ematrix}
		\b E \coloneqq \mathbb E \left[ \frac{1}{\abs{\X}} \b H \right] \in \R^{\abs{I(U)},\abs{I(U)}}
	\end{equation} we have \begin{equation*}
		\abs{\delta_{\k,\b\ell}-( \b E )_{\k,\b\ell}} \leq  2 \cdot 4^{d_s} \cdot \left( \frac{\arccos(1-\vartheta)}{\pi} \right)^d.
	\end{equation*}
\end{theorem}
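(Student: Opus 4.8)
The plan is to compute each entry of $\b E$ as a single‑node expectation, turn it into a $d$‑dimensional integral, and then identify its deviation from the identity with the corner‑cube integrals that \cref{lem:dis:1} and \cref{lem:dis:3} already control. First I would use that the nodes in $\X$ are drawn i.i.d., so that the normalized sum $\frac{1}{\abs{\X}}\b H$ in \eqref{matrixH} has the same expectation as one of its summands; recalling $(\b W^2)_{\x,\x}=\omega(\x)$ this gives $(\b E)_{\k,\b\ell}=\mathbb E\left[\omega(\x)\,T_{\k}(\x)\,T_{\b\ell}(\x)\right]$ for a single node $\x$. Writing this expectation as an integral and exploiting that $\omega$, $T_{\k}$ and $T_{\b\ell}$ all factorize over the coordinates reduces it to a product of univariate integrals, whose natural reference point is the Chebyshev orthonormality relation $\int_{[-1,1]^d} T_{\k}(\x)T_{\b\ell}(\x)\omega(\x)\d\x=\delta_{\k,\b\ell}$, i.e.\ the $\vartheta\to 0$ limit.

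The central step is to rewrite the difference so that its entire contribution is carried by the two opposite corner cubes. The aim is to reach an identity of the form
\begin{equation*}
	\delta_{\k,\b\ell}-(\b E)_{\k,\b\ell}=\int_{[-1,-1+\vartheta]^d} T_{\k}(\x)T_{\b\ell}(\x)\omega(\x)\d\x+\int_{[1-\vartheta,1]^d} T_{\k}(\x)T_{\b\ell}(\x)\omega(\x)\d\x ,
\end{equation*}
after which the triangle inequality leaves only the task of bounding the two corner integrals separately. This is precisely the point where the interplay between the uniform sampling on $[-1+\vartheta,1-\vartheta]^d$, the Chebyshev weight inside $\b W$, and the padding parameter $\vartheta$ must be tracked carefully, and I expect it to be the main obstacle: one has to show that the error localizes to the two corner cubes and not to the full boundary layer of $[-1,1]^d$, since only the corner‑cube contributions produce the $\left(\arccos(1-\vartheta)/\pi\right)^d$ behaviour appearing in the claimed bound.

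Once the difference is in the corner‑cube form, the remaining argument is routine bookkeeping with a case distinction. Because $\I(U)$ is of type \eqref{groupedSets} with $U\subset U_{d_s}$, every occurring frequency satisfies $\norm{\k}{0},\norm{\b\ell}{0}\leq d_s$, so the hypotheses of both lemmas are met. For $\k\neq\b\ell$, \cref{lem:dis:1} bounds each corner integral by $4^{d_s}\left(\arccos(1-\vartheta)/\pi\right)^d$, while for $\k=\b\ell$ the diagonal estimate \cref{lem:dis:3} gives the smaller bound $2^{d_s}\left(\arccos(1-\vartheta)/\pi\right)^d$. Using $2^{d_s}\leq 4^{d_s}$ to unify both cases and adding the contributions of the two corner cubes produces the factor $2$, yielding the stated estimate $2\cdot 4^{d_s}\left(\arccos(1-\vartheta)/\pi\right)^d$ for the off‑diagonal as well as the diagonal entries of $\b E$.

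In summary, the proof decomposes into an expectation computation (straightforward, via i.i.d.\ nodes and the product structure of $\omega$ and the $T_{\k}$), a reduction of the error to the two corner‑cube integrals (the delicate step), and a direct application of the two preceding lemmas together with the sparsity $\norm{\k}{0},\norm{\b\ell}{0}\leq d_s$ enforced by $U\subset U_{d_s}$.
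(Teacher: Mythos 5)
Your proposal reconstructs the paper's own proof essentially step for step: the paper likewise writes $(\b E)_{\k,\b\ell}=\int_{[-1+\vartheta,1-\vartheta]^d}\omega(\x)T_{\k}(\x)T_{\b\ell}(\x)\d\x$, splits this as the integral over $[-1,1]^d$ (which gives $\delta_{\k,\b\ell}$ by orthonormality) minus the two corner-cube integrals over $[-1,-1+\vartheta]^d$ and $[1-\vartheta,1]^d$, and then invokes \cref{lem:dis:1} and \cref{lem:dis:3}; your case distinction, the unification via $2^{d_s}\le 4^{d_s}$, and the factor $2$ from the two corners match the paper's argument exactly.

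The difference is that the paper asserts the corner-cube decomposition without comment, whereas you correctly single it out as the step that must still be shown --- and your suspicion is justified: that identity is false for $d\ge 2$, so this step is a genuine gap, in your proposal and in the paper alike. The set $[-1,1]^d\setminus[-1+\vartheta,1-\vartheta]^d$ is the entire boundary layer, of which the two corner cubes are only a tiny part; writing each univariate integral as $\int_{-1}^{1}=\int_{-1}^{-1+\vartheta}+\int_{-1+\vartheta}^{1-\vartheta}+\int_{1-\vartheta}^{1}$ and expanding the $d$-fold tensor product produces $3^d$ terms, not three. The dominant neglected terms are those in which exactly one coordinate lies in a corner interval; their total contribution scales like $d\cdot\arccos(1-\vartheta)/\pi$, not like $\left(\arccos(1-\vartheta)/\pi\right)^d$. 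Concretely, for $\k=\b\ell=\b 0$ and $d=2$ one computes $\delta_{\b 0,\b 0}-(\b E)_{\b 0,\b 0}=1-(1-2\rho)^2=4\rho-4\rho^2$ with $\rho=\arccos(1-\vartheta)/\pi$, which for small $\vartheta$ exceeds the claimed bound $2\cdot 4^{d_s}\rho^2$; the error simply does not localize to the two corner cubes, so no argument can close the step as you (and the paper) have set it up --- the conclusion itself would need a weaker, dimension-linear bound. A smaller point you also share with the paper: since the nodes are uniform on $[-1+\vartheta,1-\vartheta]^d$, the expectation $\mathbb{E}\left[\omega(\x)T_{\k}(\x)T_{\b\ell}(\x)\right]$ carries the uniform density factor $(2(1-\vartheta))^{-d}$, which is silently dropped when passing to the unnormalized integral.
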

\begin{proof}
	The entries of $\b E$ are given by \begin{equation*}
		(\b E)_{\k,\b\ell} = \int_{[-1+\vartheta,1-\vartheta]^d} \omega(\x) T_{\k}(\x) T_{\b\ell}(\x) \d\x.
	\end{equation*} We may rewrite this integral as \begin{align*}
		(\b E)_{\k,\b\ell} = \int_{[-1,1]^d} \omega(\x) T_{\k}(\x) T_{\b\ell}(\x) \d\x - \int_{[-1,-1+\vartheta]^d} \omega(\x) T_{\k}(\x) T_{\b\ell}(\x) \d\x \\ - \int_{[1-\vartheta,1]^d} \omega(\x) T_{\k}(\x) T_{\b\ell}(\x) \d\x \\
		= \delta_{\k,\b\ell} - \int_{[-1,-1+\vartheta]^d} \omega(\x) T_{\k}(\x) T_{\b\ell}(\x) \d\x - \int_{[1-\vartheta,1]^d} \omega(\x) T_{\k}(\x) T_{\b\ell}(\x) \d\x
	\end{align*} and apply \cref{lem:dis:1}, and \cref{lem:dis:3} to obtain the result.
\end{proof}
It remains to consider the eigenvalues of this expected value matrix $\b E$ from \eqref{Ematrix}. \begin{lemma}
	Let $\X \subset [-1+\vartheta,1-\vartheta]^d$, $0 < \vartheta < 1$, be a set of uniformly distributed i.i.d.~nodes, $\F(\X,\I(U))$ the basis matrix for the Chebyshev polynomials with respect to an index set $\I(U)$ of type \eqref{groupedSets} such that $U \subset U_{d_s}$ for superposition threshold $d_s \in \D$, and $\b E$ as in \eqref{Ematrix}. Then for every eigenvalue $\lambda \in \R$ of $\b E$ it holds $$\abs{1-\lambda} < 4^{d_s} \left( \frac{\arccos(1-\vartheta)}{\pi} \right)^d \abs{I(U)}.$$
\end{lemma}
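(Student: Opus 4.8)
The plan is to derive the eigenvalue bound directly from the entrywise estimate in \cref{expectedValue}, using only the symmetry of $\b E$ and a standard eigenvalue-localization argument. First I would record that $\b E$ is real and symmetric: its entries $(\b E)_{\k,\b\ell}=\int_{[-1+\vartheta,1-\vartheta]^d}\omega(\x)T_{\k}(\x)T_{\b\ell}(\x)\d\x$ are symmetric in $\k$ and $\b\ell$, as $\b E$ is the expectation of the symmetric positive semidefinite matrix $\b H/\abs{\X}$. Hence every eigenvalue $\lambda$ is real, and for each such $\lambda$ the number $\lambda-1$ is an eigenvalue of $\b E-\b I$, so that $\abs{1-\lambda}\le\norm{\b E-\b I}{2}=\rho(\b E-\b I)$.

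Next I would bound $\norm{\b E-\b I}{2}$ entrywise. Because $\b E-\b I$ is symmetric, its spectral norm is at most its maximal absolute row sum, $\norm{\b E-\b I}{2}\le\max_{\k}\sum_{\b\ell}\abs{(\b E)_{\k,\b\ell}-\delta_{\k,\b\ell}}$; equivalently one may apply Gershgorin's circle theorem directly to $\b E$, placing each $\lambda$ in a disk around $(\b E)_{\k,\k}$ of radius $\sum_{\b\ell\ne\k}\abs{(\b E)_{\k,\b\ell}}$ and then estimating $\abs{1-\lambda}\le\abs{1-(\b E)_{\k,\k}}+\sum_{\b\ell\ne\k}\abs{(\b E)_{\k,\b\ell}}$. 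In either formulation the right-hand side is a sum of at most $\abs{\I(U)}$ diagonal- or off-diagonal deviations, each controlled by \cref{expectedValue}. Inserting the per-entry bound $\abs{\delta_{\k,\b\ell}-(\b E)_{\k,\b\ell}}\le 2\cdot 4^{d_s}\left(\arccos(1-\vartheta)/\pi\right)^d$ and summing over the $\abs{\I(U)}$ entries of a row then produces a bound of exactly the asserted shape $C\cdot 4^{d_s}\left(\arccos(1-\vartheta)/\pi\right)^d\,\abs{\I(U)}$.

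I expect the sole obstacle to be the absolute constant $C$. Used coarsely, the uniform estimate of \cref{expectedValue} yields $2\cdot 4^{d_s}\left(\arccos(1-\vartheta)/\pi\right)^d\abs{\I(U)}$, a factor two above the stated $4^{d_s}\left(\arccos(1-\vartheta)/\pi\right)^d\abs{\I(U)}$. To match the claimed constant one must avoid the uniform theorem bound and instead keep the finer estimates separate: the sharper diagonal prefactor $2^{d_s}$ from \cref{lem:dis:3} versus the off-diagonal prefactor $4^{d_s}$ from \cref{lem:dis:1}, together with the observation that only the two boundary cubes contribute and that $\arccos(1-\vartheta)/\pi<1$, which also delivers the strict inequality. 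Conceptually nothing deeper than symmetry of $\b E$ and eigenvalue localization is needed; the delicacy lies entirely in this constant bookkeeping.
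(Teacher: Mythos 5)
Your argument is, in substance, the paper's own proof: the paper writes $\b E = \b I + \b P$ and invokes the Bauer--Fike theorem, which for a symmetric perturbation of the identity is precisely your estimate $\abs{1-\lambda}\leq\norm{\b E-\b I}{2}\leq\max_{\k}\sum_{\b\ell}\abs{(\b E-\b I)_{\k,\b\ell}}$; Gershgorin is an equivalent packaging of the same localization. Up to that point your mechanics (symmetry, realness of eigenvalues, row-sum bound, insertion of the entrywise estimate from \cref{expectedValue}) are correct, and you are right that this yields the constant $2\cdot 4^{d_s}\left(\arccos(1-\vartheta)/\pi\right)^d\abs{\I(U)}$, twice the constant stated in the lemma.

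The gap is in your proposed repair of that factor two: it does not work. The factor $2$ is not an artifact of using the uniform bound of \cref{expectedValue} instead of the individual lemmas; it comes from the two boundary cubes $[-1,-1+\vartheta]^d$ and $[1-\vartheta,1]^d$, each of which contributes up to $4^{d_s}\left(\arccos(1-\vartheta)/\pi\right)^d$ to \emph{every off-diagonal} entry by \cref{lem:dis:1}. The sharper diagonal prefactor $2^{d_s}$ from \cref{lem:dis:3} improves only one of the $\abs{\I(U)}$ terms in each row sum: writing $\rho=\arccos(1-\vartheta)/\pi$ and $n=\abs{\I(U)}$, your refined bound is $2\cdot 2^{d_s}\rho^d+2(n-1)\cdot 4^{d_s}\rho^d$, which exceeds the claimed $n\cdot 4^{d_s}\rho^d$ as soon as $n>2$. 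The observation $\rho<1$ is of no help either, since $\rho^d$ multiplies both sides, so it can deliver neither the constant nor the strict inequality. Moreover, no argument based purely on the available entrywise bounds can do better: a matrix all of whose entries equal $2\cdot 4^{d_s}\rho^d$ has spectral norm exactly $2n\cdot 4^{d_s}\rho^d$. What your proof actually establishes --- and what the paper's one-line Bauer--Fike proof establishes as well, if carried out explicitly --- is $\abs{1-\lambda}\leq 2\cdot 4^{d_s}\left(\arccos(1-\vartheta)/\pi\right)^d\abs{\I(U)}$; the absence of the factor $2$ in the lemma as printed appears to be a looseness of the paper itself, not something recoverable by finer bookkeeping of \cref{lem:dis:1} and \cref{lem:dis:3}. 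You should either state your result with the factor $2$, or note explicitly that the paper's constant is not reproducible from its own ingredients.
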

\begin{proof}
	\cref{expectedValue} tells us that we may write $\b E = \b I + \b P$ with the identity matrix $\b I$ and a perturbation matrix $\b P$. Applying the theorem of Bauer and Fike, we immediately obtain the result.
\end{proof}
\begin{corollary}\label{cor:EV}
	Let $\X \subset [-1+\vartheta,1-\vartheta]^d$, $0 < \vartheta < 1$, be a set of uniformly distributed i.i.d.~nodes, $\F(\X,\I(U))$ the basis matrix for the Chebyshev polynomials with respect to an index set $\I(U)$ of type \eqref{groupedSets} such that $U \subset U_{d_s}$ for superposition threshold $d_s \in \D$, and $\b H$ as in \eqref{matrixH}. Then for every eigenvalue $\lambda \in \R$ of $\frac{1}{\abs{\X}} \b H$ we have \begin{equation*}
		\abs{1-\lambda} < \frac{1}{2} + 4^{d_s} \kappa(\delta,\vartheta) \left( \frac{\arccos(1-\vartheta)}{\pi} \right)^d \frac{\abs{\X}}{\log(2\abs{X})}
	\end{equation*} with probability $1 - \delta$ if \begin{equation*}
		\abs{\I(U)} \leq \kappa(\delta,\vartheta) \frac{\abs{\X}}{\log(2\abs{\X})}, \quad\kappa(\delta,\vartheta) \coloneqq \frac{\left(2\theta-\theta^2\right)^{\frac{d}{2}}}{2^{d_s} \cdot 48(\sqrt{2}-\log\delta)}.
	\end{equation*}
\end{corollary}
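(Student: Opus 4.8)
The plan is to split the deviation of the eigenvalues of $\frac{1}{\abs{\X}}\b H$ from $1$ into a deterministic part, controlled by how far the expectation $\b E$ is from the identity, and a stochastic part, controlled by the fluctuation of $\frac{1}{\abs{\X}}\b H$ around its mean $\b E$. Writing $\b\Delta \coloneqq \frac{1}{\abs{\X}}\b H - \b E$, both $\frac{1}{\abs{\X}}\b H$ and $\b E$ are symmetric, so Weyl's inequality guarantees that every eigenvalue $\lambda$ of $\frac{1}{\abs{\X}}\b H$ lies within $\norm{\b\Delta}{2}$ of some eigenvalue $\mu$ of $\b E$, whence
\begin{equation*}
	\abs{1-\lambda} \leq \abs{1-\mu} + \norm{\b\Delta}{2}.
\end{equation*}
It then remains to bound the two summands separately, and the two bounds will produce the two terms in the assertion.

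For the first summand I would invoke the preceding lemma, which bounds every eigenvalue $\mu$ of $\b E$ by $\abs{1-\mu} < 4^{d_s}\left(\frac{\arccos(1-\vartheta)}{\pi}\right)^d\abs{\I(U)}$. Substituting the hypothesis $\abs{\I(U)} \leq \kappa(\delta,\vartheta)\frac{\abs{\X}}{\log(2\abs{\X})}$ immediately turns this into the second term of the asserted bound, namely $4^{d_s}\kappa(\delta,\vartheta)\left(\frac{\arccos(1-\vartheta)}{\pi}\right)^d\frac{\abs{\X}}{\log(2\abs{\X})}$. This step is purely deterministic and requires no further estimation.

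The core of the argument is the high-probability estimate $\norm{\b\Delta}{2} \leq \frac{1}{2}$, which produces the $\frac{1}{2}$ in the statement. Here I would write $\b H = \sum_{\x\in\X}\omega(\x)\,\b t(\x)\b t(\x)\transp$ as a sum of independent rank-one positive semidefinite matrices with $\b t(\x) \coloneqq (T_{\k}(\x))_{\k\in\I(U)}$, so that $\frac{1}{\abs{\X}}\b H$ is an empirical mean with expectation $\b E$. The quantity governing a matrix Chernoff bound is the per-sample operator norm $R \coloneqq \max_{\x}\omega(\x)\norm{\b t(\x)}{2}^2$. Using $\abs{T_{\k}(\x)} \leq \sqrt{2}^{\,\norm{\k}{0}} \leq 2^{d_s/2}$ gives $\norm{\b t(\x)}{2}^2 \leq 2^{d_s}\abs{\I(U)}$, while on the padded cube $1-x_s^2 \geq 2\vartheta-\vartheta^2$ forces $\omega(\x) \leq (\pi^2(2\vartheta-\vartheta^2))^{-d/2}$; the latter factor $(2\vartheta-\vartheta^2)^{-d/2}$ is precisely the source of the $(2\vartheta-\vartheta^2)^{d/2}$ appearing in the numerator of $\kappa(\delta,\vartheta)$. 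Feeding $R$ into a matrix Chernoff/Bernstein inequality in the style of \cite{KaUlVo19} and fixing the deviation level $\frac{1}{2}$ then shows that $\norm{\b\Delta}{2} \leq \frac{1}{2}$ holds with probability at least $1-\delta$ exactly when $\abs{\I(U)} \leq \kappa(\delta,\vartheta)\frac{\abs{\X}}{\log(2\abs{\X})}$.

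The main obstacle will be this concentration step: one must repeat the Chebyshev analysis of \cite{KaUlVo19} in the presence of the extra weight $\omega$, carefully tracking constants so that the sample-complexity threshold collapses to exactly $\kappa(\delta,\vartheta)$ with the constant $48(\sqrt{2}-\log\delta)$ and deviation level $\frac12$. By contrast, the eigenvalue bound on $\b E$ and the application of Weyl's inequality are routine. Combining $\abs{1-\mu} \leq 4^{d_s}\kappa(\delta,\vartheta)\left(\frac{\arccos(1-\vartheta)}{\pi}\right)^d\frac{\abs{\X}}{\log(2\abs{\X})}$ with $\norm{\b\Delta}{2}\leq\frac12$ then yields the claimed bound on $\abs{1-\lambda}$ with probability $1-\delta$.
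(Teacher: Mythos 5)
Your proposal is correct and takes essentially the same route as the paper: the paper's proof likewise applies the concentration inequality of \cite[Proposition 4.1]{KaUlVo19} with the per-sample bound $M^2 \leq 2^{d_s}\abs{\I(U)}\left(2\vartheta-\vartheta^2\right)^{-d/2}$ (your $R$) at deviation level $t=\tfrac12$, and combines it with the Bauer--Fike eigenvalue bound on $\b E$ from the preceding lemma. Your explicit Weyl-inequality splitting $\abs{1-\lambda}\leq\abs{1-\mu}+\norm{\tfrac{1}{\abs{\X}}\b H-\b E}{2}$ is precisely the ``rearranging'' the paper leaves implicit.
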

\begin{proof} 
		We apply the concentration inequality \cite[Proposition 4.1]{KaUlVo19} and note that \begin{equation*}
			M^2 \leq \sup_{\x\in\X} \sum_{\k \in \I(U)} \abs{\sqrt{\omega(\x)} T_{\k}(\x)}^2 \leq \frac{2^{d_s}\abs{\I(U)}}{\sqrt{2\theta-\theta^2}}.
		\end{equation*} Setting $t \coloneqq 0.5$ in the inequality yields the result after rearranging. 
\end{proof}

\cref{cor:EV} now tells us that the singular values $\tau_i$, $i=1,\dots,\abs{I(U)}$, of $\b W \F(\X,\I(U))$ are bounded \begin{equation*}
	\sqrt{\abs{\X}\left(\frac{1}{2} + \gamma\right)} \leq \tau_i \leq \sqrt{\abs{\X}\left(\frac{3}{2} + \gamma\right)}
\end{equation*} with $$ \gamma \coloneqq 4^{d_s} \kappa(\delta,\vartheta) \left( \frac{\arccos(1-\vartheta)}{\pi} \right)^d \frac{\abs{\X}}{\log(2\abs{X})}.$$ Moreover, the norm of the Moore-Penrose inverse is also bounded \begin{equation*}
	\norm{\b{H}^{-1} \F^\top(\X,\I(U)) \b W }{2} < \sqrt{\abs{\X}\left(\frac{1}{2} + \gamma\right)}.
\end{equation*}

\subsection{Active Set Detection}\label{sec:approx:asd}

We determine an initial approximation on the function $f$ by the partial sum $S(\X, \I(U_{d_s})) f (\x)$, cf.\ \eqref{approx_fps}, for a chosen superposition threshold $d_s \in \D$ using the method described in \cref{sec:approx:lsqr}. The set $\I(U_{d_s})$ is formed with the sets \eqref{groupedSets} and order-dependent parameters $N_{\au} \in \N$ such that $N_{\u} = N_{\au}$. In order to understand the structure of $f$, i.e., find the important ANOVA terms $f_{\u}$, we perform a sensitivity analysis using the global sensitivity indices $\gsi{\u}{S(\X, \I(U_{d_s})) f}$. A sensitivity analysis with the indices from the approximation $S(\X, \I(U_{d_s})) f$ of course only makes sense if they behave similarly to the function, i.e.,  the assumption \begin{equation}\label{assumption}
	\gsi{\u_1}{S(\X, \I(U_{d_s})) f} \leq \gsi{\u_2}{S(\X, \I(U_{d_s})) f} \Longrightarrow \gsi{\u_1}{f} \leq \gsi{\u_2}{f}.
\end{equation} holds for $\u_1, \u_2 \in U_{d_s}$. The accuracy of this assumption may depend on multiple factors like the size of the index set $\I(U_{d_s})$, the underlying function and the number of samples, but numerical experiments suggest that we can rely on these indices even for small index sets $\I(U_{d_s})$. We then use a threshold vector $\b\varepsilon \in (0,1)^{d_s}$ to form an active set of ANOVA terms \begin{equation}\label{activeSet}
	U_{\X,\b y}(\b\varepsilon) \coloneqq \emptyset \cup \left\{ \u\subset\D\setminus\emptyset \colon \gsi{\u}{S(\X, \I(U_{d_s})) f} > \varepsilon_{\au} \right\}. 
\end{equation} The inclusion condition \eqref{eq:trunc_inclusion} is fulfilled if we assume that for all $\v \subset \u$ with $\u \in U_{\X,\b y}(\b\varepsilon)$ and $\v \notin U_{\X,\b y}(\b\varepsilon)$ we have $f_{\v} \equiv 0$.

This active set $U_{\X,\b y}(\b\varepsilon)$ is then used to build a corresponding grouped index set $\I(U_{\X,\b y}(\b\varepsilon))$, see \eqref{freq:union}, with finite frequency sets $\I_{\u}$ and parameters $N_{\u}$ as in \eqref{groupedSets}. Depending on the information from the global sensitivity indices one may choose to vary the number of frequencies for terms of the same order, i.e., choose different parameters $N_{\u}$ and $N_{\v}$ for two sets $\u$ and $\v$ with $\au = \av$. 

We obtain the approximate partial sum $S(\X, \I(U_{\X,\b y}(\b\varepsilon))) f (\x)$ by applying the method from \cref{sec:approx:lsqr} again. The benefit of this second approximation is that through the smaller number of ANOVA terms we have a reduced model complexity and we may use more frequencies per remaining ANOVA term in our set $\I(U_{\X,\b y}(\b\varepsilon))$. We obtain the final approximation \begin{equation*}
	f(\x) \approx S(\X, \I(U_{\X,\b y}(\b\varepsilon))) f (\x) \coloneqq \sum_{\bm k \in \I(U_{\X,\b y}(\b\varepsilon))} \hat{f}_{\k} \, \varphi_{\k} (\x). 
\end{equation*} \cref{alg} and \cref{alg2} summarize the proposed method.

\begin{algorithm}[ht]
	\vspace{2mm}
	\begin{tabular}{ l l l }
		\textbf{Input:} & $\X \subset [-1,1]^d$ & finite node set \\
		& & distributed i.i.d.~according to probability density $\omega$ \\ 
		& $\b y$ & evaluation vector \\
		& $d_s \in \D$ & superposition threshold \\
	\end{tabular}
	\begin{algorithmic}[1]
	    \STATE{Choose finite order-dependent parameters $N_i \in \N$, $i=1,2,\dots,d_s$ as in \eqref{groupedSets}.}
	    \STATE{Compute approximation $S(\X, \I(U(d_s))) f$ by solving $$\bfh_{\text{sol}} = (\hat{f}_{\k})_{\k \in \I(U_{d_s})} \leftarrow \argmin_{\bfh} \norm{\b y - \F(\X,\I(U_{d_s})) \bfh}{2}^2.$$}
	    \STATE{Compute global sensitivity indices $\varrho(\u,S(\X, \I(U(d_s))) f)$ for approximation $S(\X, \I(U(d_s))) f$ using \eqref{eq:gsi:formula}.}
        \STATE{Choose threshold vector $\b\varepsilon \in (0,1)^{d_s}$ and build active set $U_{\X,\b y}(\b\varepsilon)$ according to \eqref{activeSet}.}
        \STATE{Use information from global sensitivity indices to choose parameters $N_{\u} \in \N$ for every ANOVA term in $U_{\X,\b y}(\b\varepsilon)$ to obtain $\I(U_{\X,\b y}(\b\varepsilon))$.}
	    \STATE{Compute approximation $S(\X, \I(U_{\X,\b y}(\b\varepsilon))) f$ by solving $$\bfh_{\text{sol}} = (\hat{f}_{\k})_{\k \in \I(U_{\X,\b y}(\b\varepsilon))} \leftarrow \argmin_{\bfh} \norm{\b y - \F(\X,\I(U_{\X,\b y}(\b\varepsilon))) \bfh}{2}^2.$$}
	\end{algorithmic}
	\begin{tabular}{ l l l }
		\textbf{Output:} & $\hat{f}_{\k} \in \R, \k \in \I(U_{\X,\b y}(\b\varepsilon))$ & approximations to basis coefficients $\fc{k}$ \\
	\end{tabular}
	\caption{ANOVA Approximation Method with nodes distributed according to the density $\omega$}
	\label{alg}
\end{algorithm}

\begin{algorithm}[ht]
	\vspace{2mm}
	\begin{tabular}{ l l l }
		\textbf{Input:} & $\X \subset [-1,1]^d$ & finite node set \\
		& & distributed uniformly \\
		& $\b y$ & evaluation vector \\
		& $d_s \in \D$ & superposition threshold \\
		& $\theta \in (0,1)$ & padding parameter \\
	\end{tabular}
	\begin{algorithmic}[1]
		\STATE{Choose finite order-dependent parameters $N_i \in \N$, $i=1,2,\dots,d_s$ as in \eqref{groupedSets}.}
		\STATE{Scale nodes $\X$ into interval $[-1+\vartheta,1-\vartheta]^d$.}
		\STATE{Set $\b W = \mathrm{diag}(\b w)$ with $\b w = (\omega(\x))_{\x\in \X}$.}
		\STATE{Compute approximation $S(\X, \I(U(d_s))) f$ by solving $$\bfh_{\text{sol}} = (\hat{f}_{\k})_{\k \in \I(U_{d_s})} \leftarrow \argmin_{\bfh} \norm{\b y - \F(\X,\I(U_{d_s})) \bfh}{2, \b W}^2.$$}
		\STATE{Compute global sensitivity indices $\varrho(\u,S(\X, \I(U(d_s))) f)$ for approximation $S(\X, \I(U(d_s))) f$ using \eqref{eq:gsi:formula}.}
		\STATE{Choose threshold vector parameter $\b\varepsilon > 0$ and build active set $U_{\X,\b y}(\b\varepsilon)$ according to \eqref{activeSet}.}
		\STATE{Use information from global sensitivity indices to choose parameters $N_{\u} \in \N$ for every ANOVA term in $U_{\X,\b y}(\b\varepsilon)$ to obtain $\I(U_{\X,\b y}(\b\varepsilon))$.}
		\STATE{Compute approximation $S(\X, \I(U_{\X,\b y}(\b\varepsilon))) f$ by solving $$\bfh_{\text{sol}} = (\hat{f}_{\k})_{\k \in \I(U_{\X,\b y}(\b\varepsilon))} \leftarrow \argmin_{\bfh} \norm{\b y - \F(\X,\I(U_{\X,\b y}(\b\varepsilon))) \bfh}{2, \b W}^2.$$}
	\end{algorithmic}
	\begin{tabular}{ l l l }
		\textbf{Output:} & $\hat{f}_{\k} \in \R, \k \in \I(U_{\X,\b y}(\b\varepsilon))$ & approximations to basis coefficients $\fc{k}$ \\
	\end{tabular}
	\caption{ANOVA Approximation Method with nodes distributed uniformly}
	\label{alg2}
\end{algorithm}

Using the iterative least-squares method LSQR, cf.~\cite{PaSa82}, the arithmetic complexity of an iteration is determined by the matrix-vector multiplications. The following results show the precise complexity of one iteration if we focus on the Chebyshev system.

\begin{theorem}\label{lem:complexity}
	Let $\Lomd$ be the weighted Lebesgue space with Chebyshev product weight $\omega$ and the Chebyshev system $\{T_{\k}\}_{\k \in \N_0^d}$ as orthonormal basis. Moreover, let $\I(U)$ for $U \subset \mathcal{P}(\D)$ be formed with the sets \eqref{groupedSets} and parameters $N_{\u} \in \N$, $\u \in U$. Then each iteration of the LSQR algorithm to solve the minimization problem \eqref{min_problem} or \eqref{min_problem_uni} with node set $\X$ and evaluations $\b y \in \R^{\abs{\X}}$ has a complexity in $$\mathcal{O}\left( \sum_{\u\in U} N_{\u}^{\au} \log N_{\u} + \abs{\X} \abs{U} \right).$$
\end{theorem}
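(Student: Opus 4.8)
The plan is to reduce one LSQR iteration to two matrix--vector products, one with $\F(\X,\I(U))$ and one with its transpose, and then to bound each product using the grouped structure of $\I(U)$ together with the fast Chebyshev evaluation from \cref{sec:prelim}. Recall from \cite{PaSa82} that a single LSQR step carries out one bidiagonalization sweep, which performs exactly one multiplication by the system matrix, one by its adjoint, and a bounded number of vector operations of length $\abs{\X}$ and $\abs{\I(U)}$. Since $\abs{\I(U)} = \sum_{\u\in U}\abs{\I_{\u}} \le \sum_{\u\in U} N_{\u}^{\au}$ by \eqref{groupedSets}, these linear-algebra costs are dominated by the transform costs established below, so it suffices to bound the cost of forming $\F(\X,\I(U))\bfh$ and $\F^\top(\X,\I(U))\f$.

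First I would split the forward product by the grouped-transformation identity $\F(\X,\I(U))\bfh = \sum_{\u\in U}\b F_{\u}\bfh_{\u}$, where $\b F_{\u} = (T_{\k}(\x))_{\x\in\X,\,\k\in\I_{\u}}$, reducing the task to bounding each block product $\b F_{\u}\bfh_{\u}$ and summing. Fix $\u\in U$. By \eqref{groupedSets} every $\k\in\I_{\u}$ has $\supp\k=\u$, so $T_{\k}(\x)$ depends only on the $\au$ coordinates $\x_{\u}$, and the reduced frequencies $\k_{\u}$ range over the subcube $\{1,\dots,N_{\u}-1\}^{\au}$. Embedding this subcube into the full cube $\{0,\dots,N_{\u}-1\}^{\au}$ by padding the absent coefficients with zeros turns $\b F_{\u}\bfh_{\u}$ into an $\au$-dimensional Chebyshev partial-sum evaluation at the $\abs{\X}$ projected nodes $\{\x_{\u}\colon\x\in\X\}\subset[-1,1]^{\au}$. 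By the fast evaluation \eqref{eq:pre:P} specialized to the Chebyshev system, where the discrete polynomial transform is unnecessary, this costs $\mathcal{O}\left(N_{\u}^{\au}\log N_{\u} + \abs{\X}\right)$ arithmetic operations.

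Summing over $\u\in U$ then yields the claimed bound $\mathcal{O}\left(\sum_{\u\in U} N_{\u}^{\au}\log N_{\u} + \abs{\X}\abs{U}\right)$ for the forward product. The adjoint product $\F^\top(\X,\I(U))\f$ splits blockwise identically, and each $\b F_{\u}^\top\f$ is handled by the transposed algorithm \eqref{eq:pre:P2} at the same complexity, since a fast transform amounts to a sparse factorization of the transform matrix whose transpose is applied at equal cost. For the preconditioned problem \eqref{min_problem_uni} the only extra work per iteration is multiplication by the diagonal matrix $\b W$, costing $\mathcal{O}(\abs{\X})$ and absorbed into the bound; note that $\b H$ from \eqref{matrixH} is never assembled, as LSQR operates on the rectangular matrix $\b W\F(\X,\I(U))$ directly. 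The one point that needs care is the per-block reduction: one must verify that restricting to frequencies of full support $\u$ rather than a subset does not raise the cost, which the zero-padding embedding settles so that the $\au$-dimensional instance of \eqref{eq:pre:P} applies verbatim. Everything else is bookkeeping of the LSQR vector operations and of the summation over the $\abs{U}$ groups.
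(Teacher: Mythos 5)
Your proof is correct and follows essentially the same route as the paper's: one multiplication by $\F(\X,\I(U))$ and one by $\F^\top(\X,\I(U))$ per LSQR iteration, decomposed blockwise via the grouped-transformation structure into one NFCT and one adjoint NFCT per ANOVA term at cost $\mathcal{O}(N_{\u}^{\au}\log N_{\u} + \abs{\X})$ each, then summed over $\u \in U$. Your write-up is simply more explicit than the paper's (zero-padding of the index sets, accounting of LSQR's vector operations, and the diagonal preconditioner $\b W$), which are details the paper leaves implicit.
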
 
\begin{proof}
	During each iteration of the least-squares algorithm \cite{PaSa82} there are two matrix multiplications, one with $\b F$ and one with $\b{F}^\top$. We have to compute one nonequispaced fast cosine transform (NFCT) and one adjoined nonequispaced fast cosine transform for each ANOVA term $f_{\u}$, $\u \in U$, with complexity in $\mathcal{O}(N_{\u}^{\au} \log N_{\u})$ each. Summing over the complexities yields the result.
\end{proof}

\begin{corollary}\label{cor:comp}
	Let $\Lomd$ be the weighted Lebesgue space with Chebyshev product weight $\omega$ and the Chebyshev system $\{T_{\k}\}_{\k \in \N_0^d}$ as orthonormal basis. Moreover, let $\I(U)$ for $U = U_{d_s} \subset \mathcal{P}(\D)$ with superposition threshold $d_s \in \D$ be formed with the sets \eqref{groupedSets} and order-dependent parameters $N_{\u} = N_{\au} \in \N$, $\u \in U$. Then each iteration of the algorithm to solve the minimization problem \eqref{min_problem} or \eqref{min_problem_uni} with node set $\X$ and evaluations $\b y \in \R^{\abs{\X}}$ has a complexity in $$\mathcal{O}\left( d^{d_s} \left( N_{d_s}^{d_s} \log N_{d_s} + \abs{\X} \right)\right)$$ if $ N_{d_s} = \max_{j=1,2,\dots,d_s}  N_{j}$.	
\end{corollary}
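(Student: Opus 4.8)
The plan is to start directly from the bound in \cref{lem:complexity} and specialize it to the case $U = U_{d_s}$ with order-dependent parameters $N_{\u} = N_{\au}$. Since all subsets $\u$ of the same cardinality share the same parameter, the natural first step is to reorganize the transform cost $\sum_{\u \in U_{d_s}} N_{\u}^{\au} \log N_{\u}$ according to the order $\au$. Grouping by cardinality $j = 0, 1, \dots, d_s$ and using that there are exactly $\binom{d}{j}$ subsets $\u \subset \D$ with $\abs{\u} = j$, this sum becomes $\sum_{j=0}^{d_s} \binom{d}{j} N_j^{\,j} \log N_j$.

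Next I would bound each summand uniformly. Writing $N_{d_s} = \max_{j} N_j$, for every $j \le d_s$ we have $N_j^{\,j} \le N_{d_s}^{d_s}$ and $\log N_j \le \log N_{d_s}$, so each summand is at most $N_{d_s}^{d_s} \log N_{d_s}$. It then remains to count the terms, i.e.\ to control $\abs{U_{d_s}} = \sum_{j=0}^{d_s} \binom{d}{j}$. Using $\binom{d}{j} \le d^{\,j} \le d^{d_s}$ for $j \le d_s$ (or, alternatively, the explicit bound \eqref{polyterms}), one obtains $\abs{U_{d_s}} \in \mathcal{O}(d^{d_s})$ for fixed $d_s$.

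Combining these two observations, the transform part of the cost satisfies $\sum_{\u \in U_{d_s}} N_{\u}^{\au} \log N_{\u} \le \abs{U_{d_s}}\, N_{d_s}^{d_s} \log N_{d_s} \in \mathcal{O}\bigl(d^{d_s} N_{d_s}^{d_s} \log N_{d_s}\bigr)$, while the remaining term in \cref{lem:complexity} is simply $\abs{\X}\,\abs{U_{d_s}} \in \mathcal{O}\bigl(d^{d_s} \abs{\X}\bigr)$. Adding both contributions and factoring out $d^{d_s}$ yields the claimed complexity $\mathcal{O}\bigl(d^{d_s}(N_{d_s}^{d_s}\log N_{d_s} + \abs{\X})\bigr)$.

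The computation here is entirely routine; the only point requiring a little care — and hence the main obstacle, such as it is — is the uniform bound on the number of ANOVA terms. One must make explicit that $d_s$ is treated as a fixed constant, so that the factor $(d_s+1)$ (or the constant $\e^{d_s}/d_s^{d_s}$ coming from \eqref{polyterms}) is absorbed into the $\mathcal{O}$-constant, leaving the clean polynomial dependence $d^{d_s}$ on the spatial dimension.
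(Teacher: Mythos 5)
Your proof is correct and takes essentially the same route as the paper: the paper's proof likewise just specializes \cref{lem:complexity} to $U = U_{d_s}$ and invokes the cardinality estimate $\abs{U_{d_s}} \leq (\e\cdot d/d_s)^{d_s}$ from \eqref{polyterms}, with the uniform bounds $N_j^{\,j}\log N_j \leq N_{d_s}^{d_s}\log N_{d_s}$ and the absorption of $d_s$-dependent constants into the $\mathcal{O}$-constant left implicit. Your alternative count via $\sum_{j=0}^{d_s}\binom{d}{j} \leq (d_s+1)\,d^{d_s}$ is an equally valid way to obtain the same $\mathcal{O}(d^{d_s})$ factor.
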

\begin{proof}
	This follows directly from the previous theorem and the estimate $\abs{U_{d_s}}\leq(\e\cdot d/d_s)^{d_s}$ from \eqref{polyterms}.
\end{proof}

\begin{remark}
    The proposed method is in principle related to sparse polynomial approximation, see e.g.~\cite{ChCoSchw15}. The first step of considering ANOVA terms of order up to the superposition dimension $d_s$ is equal to considering the basis functions $\phi_{\k}$ with $\norm{\k}{0} \leq d_s$. We combine this with fast algorithms for the solution of the corresponding least-squares problems that are able to deal with scattered data. Our approach also differs in the fact that we use the importance of ANOVA terms with global sensitivity indices to characterize important basis functions.
\end{remark}

\section{Numerical Experiments}\label{sec:numerics}

In this section we apply the proposed approximation method to high-dimension benchmark functions. We start with an $8$-dimensional function that is the sum of products of B-splines in \cref{sec:splines}. A similar function has been considered in \cite{PoVo17}. In \cref{sec:friedman} we consider the well-known Friedman benchmark functions which have previously been used an example for a synthetic regression problem, cf.\ \cite{MeLeHo03, BeGaMo09, BiDaLa11, PoSchm21}. The method has been implemented as a Julia package \cite{BaSchm20}. The padding parameter for uniformly distributed nodes is fixed as $\theta = 10^{-4}$.

\subsection{B-Spline Function}\label{sec:splines}

We apply our method to the test function $\fun{f}{[-1,1]^8}{\R}$, \begin{equation}\label{testfun}
	f(\x) = B_2(x_1) B_4(x_5) + B_2(x_2) B_4(x_6) + B_2(x_3) B_4(x_7) + B_2(x_4) B_4(x_8),
\end{equation}
where $B_2$ and $B_4$ are parts of shifted, scaled and dilated B-splines of order 2 and 4, respectively. In \Cref{fig:splines} we have visualized the splines $B_2$ and $B_4$ which are elements of $\Lom$ with weight $\tilde\omega (x):=\pi^{-1} \cdot (1-x^2)^{-1/2}$ such that $\norm{B_2}{\Lom} = \norm{B_4}{\Lom} = 1$. We remark that the basis coefficients $\mathrm{c}_{k}\!(B_2)$ and $\mathrm{c}_{k}\!(B_4)$ decay like $\sim k^{-3}$ and $\sim k^{-5}$, respectively. Moreover, $f$ is an element of the tensor product space $\Lomdn$. As basis we have the normed Chebyshev polynomials of first kind $\{T_{\k}\}_{\k\in\N_0^8}$. 

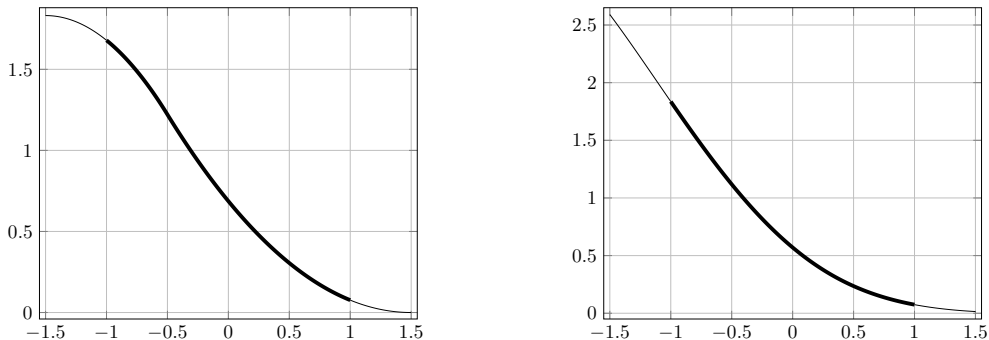
\begin{figure}[ht]
	\begin{minipage}{0.45\textwidth}
		
		\begin{tikzpicture}[scale=0.725]
		\begin{axis}[
		grid=major,
		xmin=-1.55,
		xmax=1.55,
		ymin=-0.04,
		ymax=1.88
		]
		\addplot[smooth] coordinates {
			(-1.5,1.8315970148873788)(-1.49,1.831535961653549)(-1.48,1.8313528019520602)(-1.47,1.8310475357829126)(-1.46,1.8306201631461054)(-1.45,1.830070684041639)(-1.44,1.829399098469514)(-1.43,1.8286054064297295)(-1.42,1.8276896079222855)(-1.41,1.8266517029471827)(-1.4,1.8254916915044204)(-1.39,1.8242095735939998)(-1.38,1.822805349215919)(-1.37,1.82127901837018)(-1.36,1.8196305810567808)(-1.35,1.8178600372757237)(-1.34,1.8159673870270066)(-1.33,1.8139526303106301)(-1.32,1.811815767126595)(-1.31,1.8095567974749005)(-1.3,1.807175721355547)(-1.29,1.804672538768534)(-1.28,1.8020472497138622)(-1.27,1.7992998541915317)(-1.26,1.796430352201541)(-1.25,1.7934387437438917)(-1.24,1.7903250288185828)(-1.23,1.787089207425615)(-1.22,1.7837312795649887)(-1.21,1.7802512452367023)(-1.2,1.776649104440757)(-1.19,1.772924857177153)(-1.18,1.7690785034458896)(-1.17,1.7651100432469666)(-1.16,1.7610194765803846)(-1.15,1.7568068034461437)(-1.14,1.752472023844244)(-1.13,1.7480151377746849)(-1.12,1.743436145237466)(-1.11,1.7387350462325886)(-1.1,1.7339118407600522)(-1.09,1.728966528819856)(-1.08,1.7238991104120007)(-1.07,1.7187095855364867)(-1.06,1.713397954193313)(-1.05,1.7079642163824809)(-1.04,1.702408372103989)(-1.03,1.6967304213578378)(-1.02,1.690930364144028)(-1.01,1.6850082004625588)(-1.0,1.6789639303134305)
		};
		\addplot[smooth, line width=2pt] coordinates {
			
			(-1.0,1.6789639303134305)(-0.99,1.6727975536966426)(-0.98,1.6665090706121963)(-0.97,1.6600984810600905)(-0.96,1.6535657850403256)(-0.95,1.6469109825529011)(-0.94,1.640134073597818)(-0.93,1.6332350581750756)(-0.92,1.6262139362846741)(-0.91,1.6190707079266131)(-0.9,1.6118053731008932)(-0.89,1.604417931807514)(-0.88,1.596908384046476)(-0.87,1.5892767298177786)(-0.86,1.581522969121422)(-0.85,1.573647101957406)(-0.84,1.5656491283257312)(-0.83,1.5575290482263973)(-0.82,1.5492868616594042)(-0.81,1.5409225686247519)(-0.8,1.5324361691224404)(-0.79,1.5238276631524694)(-0.78,1.5150970507148396)(-0.77,1.5062443318095509)(-0.76,1.4972695064366026)(-0.75,1.4881725745959953)(-0.74,1.4789535362877284)(-0.73,1.469612391511803)(-0.72,1.4601491402682185)(-0.71,1.4505637825569742)(-0.7,1.4408563183780712)(-0.69,1.431026747731509)(-0.68,1.4210750706172874)(-0.67,1.4110012870354072)(-0.66,1.4008053969858671)(-0.65,1.3904874004686685)(-0.64,1.38004729748381)(-0.63,1.3694850880312932)(-0.62,1.3588007721111166)(-0.61,1.3479943497232811)(-0.6,1.3370658208677864)(-0.59,1.3260151855446325)(-0.58,1.3148424437538195)(-0.57,1.3035475954953475)(-0.56,1.2921306407692161)(-0.55,1.2805915795754257)(-0.54,1.2689304119139762)(-0.53,1.257147137784867)(-0.52,1.2452417571880992)(-0.51,1.2332142701236721)(-0.5,1.2210646765915858)(-0.49,1.2088845564425847)(-0.48,1.1967654895274131)(-0.47,1.1847074758460714)(-0.46,1.1727105153985589)(-0.45,1.1607746081848762)(-0.44,1.1488997542050232)(-0.43,1.1370859534589994)(-0.42,1.1253332059468055)(-0.41,1.113641511668441)(-0.4,1.1020108706239062)(-0.39,1.0904412828132009)(-0.38,1.0789327482363253)(-0.37,1.067485266893279)(-0.36,1.0560988387840626)(-0.35,1.0447734639086756)(-0.34,1.0335091422671183)(-0.33,1.0223058738593906)(-0.32,1.0111636586854922)(-0.31,1.0000824967454234)(-0.3,0.9890623880391844)(-0.29,0.978103332566775)(-0.28,0.967205330328195)(-0.27,0.9563683813234448)(-0.26,0.9455924855525242)(-0.25,0.9348776430154329)(-0.24,0.9242238537121712)(-0.23,0.9136311176427393)(-0.22,0.903099434807137)(-0.21,0.8926288052053639)(-0.2,0.8822192288374208)(-0.19,0.871870705703307)(-0.18,0.861583235803023)(-0.17,0.8513568191365684)(-0.16,0.8411914557039434)(-0.15,0.8310871455051482)(-0.14,0.8210438885401823)(-0.13,0.8110616848090462)(-0.12,0.8011405343117395)(-0.11,0.7912804370482625)(-0.1,0.7814813930186147)(-0.09,0.7717434022227971)(-0.08,0.7620664646608089)(-0.07,0.75245058033265)(-0.06,0.7428957492383209)(-0.05,0.7334019713778211)(-0.04,0.7239692467511512)(-0.03,0.7145975753583108)(-0.02,0.7052869571993)(-0.01,0.6960373922741186)(0.0,0.686848880582767)(0.01,0.677721422125245)(0.02,0.6686550169015524)(0.03,0.6596496649116895)(0.04,0.650705366155656)(0.05,0.6418221206334523)(0.06,0.632999928345078)(0.07,0.6242387892905334)(0.08,0.6155387034698184)(0.09,0.606899670882933)(0.1,0.598321691529877)(0.11,0.5898047654106507)(0.12,0.581348892525254)(0.13,0.5729540728736868)(0.14,0.5646203064559493)(0.15,0.5563475932720413)(0.16,0.5481359333219629)(0.17,0.539985326605714)(0.18,0.5318957731232947)(0.19,0.5238672728747051)(0.2,0.515899825859945)(0.21,0.5079934320790145)(0.22,0.5001480915319135)(0.23,0.4923638042186422)(0.24,0.4846405701392004)(0.25,0.4769783892935882)(0.26,0.4693772616818056)(0.27,0.46183718730385254)(0.28,0.45435816615972907)(0.29,0.44694019824943526)(0.3,0.43958328357297094)(0.31,0.4322874221303362)(0.32,0.42505261392153104)(0.33,0.41787885894655546)(0.34,0.41076615720540943)(0.35,0.4037145086980931)(0.36,0.39672391342460617)(0.37,0.38979437138494905)(0.38,0.3829258825791213)(0.39,0.3761184470071232)(0.4,0.36937206466895467)(0.41,0.3626867355646157)(0.42,0.3560624596941064)(0.43,0.34949923705742664)(0.44,0.34299706765457644)(0.45,0.33655595148555584)(0.46,0.3301758885503648)(0.47,0.3238568788490034)(0.48,0.31759892238147147)(0.49,0.31140201914776916)(0.5,0.30526616914789645)(0.51,0.2991913723818533)(0.52,0.2931776288496397)(0.53,0.28722493855125575)(0.54,0.2813333014867014)(0.55,0.2755027176559765)(0.56,0.2697331870590813)(0.57,0.26402470969601566)(0.58,0.25837728556677964)(0.59,0.25279091467137305)(0.6,0.24726559700979617)(0.61,0.24180133258204875)(0.62,0.23639812138813107)(0.63,0.2310559634280428)(0.64,0.22577485870178424)(0.65,0.22055480720935516)(0.66,0.21539580895075575)(0.67,0.21029786392598582)(0.68,0.2052609721350456)(0.69,0.2002851335779349)(0.7,0.19537034825465383)(0.71,0.1905166161652022)(0.72,0.18572393730958014)(0.73,0.1809923116877878)(0.74,0.17632173929982506)(0.75,0.17171222014569176)(0.76,0.167163754225388)(0.77,0.162676341538914)(0.78,0.15824998208626959)(0.79,0.1538846758674546)(0.8,0.1495804228824692)(0.81,0.1453372231313135)(0.82,0.1411550766139873)(0.83,0.13703398333049077)(0.84,0.13297394328082365)(0.85,0.12897495646498627)(0.86,0.12503702288297833)(0.87,0.12116014253480015)(0.88,0.11734431542045135)(0.89,0.1135895415399323)(0.9,0.10989582089324268)(0.91,0.10626315348038277)(0.92,0.1026915393013523)(0.93,0.09918097835615158)(0.94,0.09573147064478035)(0.95,0.09234301616723878)(0.96,0.08901561492352662)(0.97,0.08574926691364405)(0.98,0.0825439721375912)(0.99,0.07939973059536794)(1.0,0.07631654228697411)
		};
		\addplot[smooth] coordinates {
			(1.0,0.07631654228697411)(1.01,0.07329440721240986)(1.02,0.07033332537167532)(1.03,0.06743329676477036)(1.04,0.0645943213916949)(1.05,0.06181639925244893)(1.06,0.059099530347032754)(1.07,0.05644371467544607)(1.08,0.0538489522376889)(1.09,0.0513152430337613)(1.1,0.04884258706366341)(1.11,0.046430984327395085)(1.12,0.0440804348249562)(1.13,0.04179093855634703)(1.14,0.03956249552156736)(1.15,0.037395105720617335)(1.16,0.03528876915349694)(1.17,0.03324348582020592)(1.18,0.03125925572074454)(1.19,0.02933607885511287)(1.2,0.027473955223310774)(1.21,0.02567288482533811)(1.22,0.023932867661195022)(1.23,0.022253903730881642)(1.24,0.0206359930343979)(1.25,0.019079135571743528)(1.26,0.017583331342918798)(1.27,0.016148580347923706)(1.28,0.014774882586758256)(1.29,0.013462238059422243)(1.3,0.012210646765915802)(1.31,0.011020108706239068)(1.32,0.009890623880391908)(1.33,0.008822192288374185)(1.34,0.007814813930186102)(1.35,0.0068684888058275235)(1.36,0.005983216915298857)(1.37,0.005158998258599424)(1.38,0.0043958328357296995)(1.39,0.003693720646689479)(1.4,0.0030526616914791026)(1.41,0.0024726559700980278)(1.42,0.001953703482546593)(1.43,0.0014958042288245957)(1.44,0.0010989582089323063)(1.45,0.0007631654228698604)(1.46,0.0004884258706365805)(1.47,0.00027473955223300875)(1.48,0.00012210646765928068)(1.49,3.0526616914854066e-5)(1.5,0.0)
		};
		
		\end{axis}
		\end{tikzpicture}
		
	\end{minipage}
	\hfill
	\begin{minipage}{0.45\textwidth}
		
		\begin{tikzpicture}[scale=0.725]
		\begin{axis}[
		grid=major,
		xmin=-1.55,
		xmax=1.55,
		ymin=-0.05,
		ymax=2.65
		]
		\addplot[smooth] coordinates {
			(-1.5,2.5907157498280107)(-1.49,2.576549597759345)(-1.48,2.56231490123993)(-1.47,2.548013758749521)(-1.46,2.533648254636701)(-1.45,2.519220459118872)(-1.44,2.5047324282822614)(-1.43,2.49018620408192)(-1.42,2.4755838143417197)(-1.41,2.4609272727543576)(-1.4,2.4462185788813535)(-1.39,2.43145971815305)(-1.38,2.4166526618686133)(-1.37,2.4017993671960327)(-1.36,2.3869017771721204)(-1.35,2.371961820702512)(-1.34,2.356981412561666)(-1.33,2.3419624533928642)(-1.32,2.3269068297082125)(-1.31,2.3118164138886383)(-1.3,2.2966930641838936)(-1.29,2.2815386247125535)(-1.28,2.266354925462014)(-1.27,2.2511437822884983)(-1.26,2.2359069969170493)(-1.25,2.2206463569415344)(-1.24,2.205363635824645)(-1.23,2.1900605928978942)(-1.22,2.1747389733616185)(-1.21,2.1594005082849796)(-1.2,2.144046914605959)(-1.19,2.1286798951313646)(-1.18,2.113301138536825)(-1.17,2.0979123193667935)(-1.16,2.0825150980345457)(-1.15,2.0671111208221817)(-1.14,2.051702019880623)(-1.13,2.036289413229616)(-1.12,2.0208749047577292)(-1.11,2.005460084222354)(-1.1,1.9900465272497054)(-1.09,1.9746357953348228)(-1.08,1.959229435841567)(-1.07,1.9438289820026224)(-1.06,1.9284359529194974)(-1.05,1.9130518535625227)(-1.04,1.897678174770853)(-1.03,1.8823163932524651)(-1.02,1.8669679715841596)(-1.01,1.851634358211561)(-1.0,1.8363169874491154)
		};
		\addplot[smooth, line width=2pt] coordinates {
			(-1.0,1.8363169874491154)(-0.99,1.8210172794800938)(-0.98,1.8057366403565887)(-0.97,1.7904764619995164)(-0.96,1.775238122198618)(-0.95,1.7600229846124547)(-0.94,1.7448323987684131)(-0.93,1.7296677000627034)(-0.92,1.714530209760357)(-0.91,1.6994212349952298)(-0.9,1.6843420687700001)(-0.89,1.6692939899561707)(-0.88,1.654278263294066)(-0.87,1.639296139392835)(-0.86,1.6243488547304485)(-0.85,1.6094376316537016)(-0.84,1.5945636783782122)(-0.83,1.579728188988421)(-0.82,1.5649323434375926)(-0.81,1.550177307547815)(-0.8,1.5354642330099975)(-0.79,1.5207942573838746)(-0.78,1.506168504098003)(-0.77,1.4915880824497634)(-0.76,1.4770540876053588)(-0.75,1.4625676005998154)(-0.74,1.4481296883369836)(-0.73,1.4337414035895355)(-0.72,1.4194037849989676)(-0.71,1.4051178570755996)(-0.7,1.3908846301985727)(-0.69,1.3767051006158535)(-0.68,1.362580250444231)(-0.67,1.3485110476693163)(-0.66,1.3344984461455454)(-0.65,1.3205433855961757)(-0.64,1.3066467916132898)(-0.63,1.2928095756577915)(-0.62,1.2790326350594092)(-0.61,1.2653168530166938)(-0.6,1.2516630985970199)(-0.59,1.2380722267365842)(-0.58,1.2245450782404081)(-0.57,1.211082479782335)(-0.56,1.1976852439050323)(-0.55,1.1843541690199895)(-0.54,1.1710900394075208)(-0.53,1.1578936252167618)(-0.52,1.144765682465673)(-0.51,1.1317069530410364)(-0.5,1.1187181646984592)(-0.49,1.1058000310623701)(-0.48,1.0929532516260214)(-0.47,1.080178511751489)(-0.46,1.0674764826696719)(-0.45,1.0548478214802917)(-0.44,1.042293171151894)(-0.43,1.0298131605218463)(-0.42,1.0174084042963412)(-0.41,1.005079503050393)(-0.4,0.9928270432278394)(-0.39,0.980651597141342)(-0.38,0.9685537229723844)(-0.37,0.9565339647712748)(-0.36,0.9445928524571434)(-0.35,0.932730901817944)(-0.34,0.920948614510454)(-0.33,0.9092464780602731)(-0.32,0.8976249658618252)(-0.31,0.8860845371783562)(-0.3,0.8746256371419363)(-0.29,0.8632486967534586)(-0.28,0.8519541328826388)(-0.27,0.8407423482680163)(-0.26,0.8296137315169536)(-0.25,0.8185686571056365)(-0.24,0.8076074853790738)(-0.23,0.7967305625510973)(-0.22,0.7859382207043621)(-0.21,0.7752307777903472)(-0.2,0.7646085376293537)(-0.19,0.7540717899105065)(-0.18,0.7436208101917535)(-0.17,0.7332558598998657)(-0.16,0.7229771863304376)(-0.15,0.7127850226478865)(-0.14,0.7026795878854531)(-0.13,0.6926610869452016)(-0.12,0.6827297105980186)(-0.11,0.6728856354836145)(-0.1,0.6631290241105224)(-0.09,0.6534600248560993)(-0.08,0.643878771966525)(-0.07,0.634385385556802)(-0.06,0.6249799716107568)(-0.05,0.6156626219810387)(-0.04,0.60643341438912)(-0.03,0.5972924124252964)(-0.02,0.5882396655486869)(-0.01,0.5792752090872334)(0.0,0.5703990642377013)(0.01,0.5616112380656788)(0.02,0.5529117235055776)(0.03,0.5443004993606325)(0.04,0.5357775303029014)(0.05,0.5273427668732655)(0.06,0.5189961454814291)(0.07,0.5107375884059197)(0.08,0.502567003794088)(0.09,0.4944842856621079)(0.1,0.48648931389497624)(0.11,0.47858195424651356)(0.12,0.4707620583393631)(0.13,0.46302946366499154)(0.14,0.4553839935836885)(0.15,0.4478254573245672)(0.16,0.4403536499855636)(0.17,0.43296835253343713)(0.18,0.42566933180377026)(0.19,0.4184563405009688)(0.2,0.4113291171982614)(0.21,0.4042873863377002)(0.22,0.3973308582301606)(0.23,0.39045922905534103)(0.24,0.3836721808617629)(0.25,0.37696938156677107)(0.26,0.3703504849565336)(0.27,0.3638151306860416)(0.28,0.3573629442791095)(0.29,0.35099353712837467)(0.3,0.3447065064952979)(0.31,0.33850143551016304)(0.32,0.3323778931720772)(0.33,0.3263354343489707)(0.34,0.3203735997775969)(0.35,0.3144919160635324)(0.36,0.30868989568117705)(0.37,0.3029670369737538)(0.38,0.29732282415330896)(0.39,0.29175672730071167)(0.4,0.2862682023656547)(0.41,0.2808566911666535)(0.42,0.27552162139104713)(0.43,0.2702624065949978)(0.44,0.2650784462034904)(0.45,0.25996912551033385)(0.46,0.2549338156781593)(0.47,0.24997187373842208)(0.48,0.24508264259139978)(0.49,0.24026545100619365)(0.5,0.23551961362072826)(0.51,0.23084443167774973)(0.52,0.2262392011208132)(0.53,0.22170322069026907)(0.54,0.21723579265926202)(0.55,0.2128362228337309)(0.56,0.20850382055240876)(0.57,0.20423789868682296)(0.58,0.20003777364129466)(0.59,0.1959027653529399)(0.6,0.19183219729166825)(0.61,0.1878253964601841)(0.62,0.18388169339398538)(0.63,0.18000042216136491)(0.64,0.1761809203634091)(0.65,0.17242252913399914)(0.66,0.16872459313980978)(0.67,0.16508646058031068)(0.68,0.16150748318776503)(0.69,0.15798701622723083)(0.7,0.15452441849655982)(0.71,0.15111905232639816)(0.72,0.14777028358018615)(0.73,0.14447748165415839)(0.74,0.14124001947734355)(0.75,0.13805727351156458)(0.76,0.13492862375143866)(0.77,0.1318534537243771)(0.78,0.1288311504905855)(0.79,0.12586110464306355)(0.8,0.12294271030760524)(0.81,0.12007536514279879)(0.82,0.11725847034002655)(0.83,0.11449143062346492)(0.84,0.11177365425008497)(0.85,0.10910455300965141)(0.86,0.10648354222472366)(0.87,0.10391004075065488)(0.88,0.10138347097559292)(0.89,0.09890325882047932)(0.9,0.0964688337390503)(0.91,0.0940796287178359)(0.92,0.0917350802761607)(0.93,0.08943462846614314)(0.94,0.08717771687269621)(0.95,0.0849637926135268)(0.96,0.08279230633913619)(0.97,0.08066271223281983)(0.98,0.07857446801066734)(0.99,0.07652703492156256)(1.0,0.07451987774718355)
		};
		\addplot[smooth] coordinates {
			(1.0,0.07451987774718355)(1.01,0.07255246480200256)(1.02,0.07062426793328602)(1.03,0.06873476252109463)(1.04,0.06688342747828323)(1.05,0.0650697452505009)(1.06,0.06329320181619089)(1.07,0.06155328668659069)(1.08,0.05984949290573201)(1.09,0.05818131705044068)(1.1,0.05654825923033684)(1.11,0.054949823087834755)(1.12,0.05338551579814294)(1.13,0.05185484806926412)(1.14,0.05035733414199514)(1.15,0.04889249178992714)(1.16,0.047459842319445444)(1.17,0.046058910569729564)(1.18,0.044689224912753234)(1.19,0.043350317253284386)(1.2,0.04204172302888513)(1.21,0.04076298120991183)(1.22,0.039513634299515)(1.23,0.03829322833363942)(1.24,0.03710131288102401)(1.25,0.03593744104320194)(1.26,0.03480116945450057)(1.27,0.03369205828204146)(1.28,0.03260967122574037)(1.29,0.03155357551830729)(1.3,0.03052334192524638)(1.31,0.029518544744856025)(1.32,0.028538761808228806)(1.33,0.02758357447925152)(1.34,0.02665256765460516)(1.35,0.02574532976376492)(1.36,0.024861452769000203)(1.37,0.024000532165374615)(1.38,0.023162166980745982)(1.39,0.02234595977576629)(1.4,0.02155151664388177)(1.41,0.020778447211332852)(1.42,0.020026364637154154)(1.43,0.01929488561317452)(1.44,0.01858363036401698)(1.45,0.017892222647098773)(1.46,0.01722028975263135)(1.47,0.016567462503620355)(1.48,0.015933375255865644)(1.49,0.015317665897961278)(1.5,0.014719975851295516)
		};
		\end{axis}
		\end{tikzpicture}
		
	\end{minipage}
	
	\caption{B-splines $B_2$ (left) and $B_4$ (right). The relevant part in $[-1,1]$ is highlighted.}
	\label{fig:splines}
\end{figure}

The ANOVA terms $f_{\u}$ are nonzero for $$\u \in U^\ast \coloneqq \mathcal{P}(\{ 1,5 \}) \cup \mathcal{P}(\{ 2,6 \}) \cup \mathcal{P}(\{ 3,7 \})\cup \mathcal{P}(\{ 4,8 \})$$ which we call our active set of terms. The function $f$ therefore has a superposition dimension $2$ for the accuracy $\delta = 1$, cf.\ \eqref{eq:anova:superposition}, i.e., $\mathrm{T}_{3} f = f$. This leads to $d_s = 2$ being the optimal choice for the superposition threshold with no error caused by ANOVA truncation. In a scattered data scenario with an unknown function $f$ this information is of course not known. For our numerical experiments we fix two sampling sets, $\X_{\text{uni}} \subset (-1,1)^9$ with uniformly distributed nodes and $\X_{\text{cheb}} \subset [-1,1]^9$ with nodes distributed according to $\omega$. Moreover, we have $M \coloneqq \abs{\X_{\text{cheb}}} = \abs{\X_{\text{uni}}} = 10000$, and an evaluation vector $\b y = ( f(\x) )_{\x\in \X}$. In the following, we always choose the superposition threshold $d_s = 2$.

Our first goal is to detect the ANOVA terms in $U^\ast$. To this end, we use the first step of our method and choose a frequency index set $\I(U_{d_s}) \subset \N_0^8$ through order-dependent sets $\I_\emptyset = \{\b 0\}$, $$\I_{\u} = \{\k\in\N^d_0 \colon \k_{\uc} = \b 0, k_{j} = 1,\dots,N_{\au}-1, j\in\u \}$$ with $N_1,N_2 \in \N$. We consider the two errors \begin{equation*}
	\varepsilon_{\ell_2}^{\X}(f,\tilde f) = \frac{1}{\norm{\b y}{2}}\sqrt{\sum_{\x\in\X}\abs{f(\x)-\tilde{f}(\x)}^2}  
\end{equation*} and $$ \varepsilon_{\mathrm{L}_2}(f,\tilde f) = \frac{1}{\norm{f}{\Lomdn}} \norm{f - \tilde f}{\Lomdn}$$
where $\tilde{f}$ is an approximation on $f$. Here, the error $\varepsilon_{\ell_2}^{\X}$ can be regarded as a training error since it is taken at a given sampling set $\X$ and the error $\varepsilon_{\mathrm{L}_2}$ as a generalization error since it measures the error in the basis coefficients. 

Our goal is to find the important ANOVA terms, i.e., the terms in $U^\ast$. In order to achieve this we expect to have intervals $I_j \subset (0,1)$, $j=1,2$, in which to choose the threshold vector $\b\varepsilon$ with $\eps_j \in I_j$ such that \begin{equation*}
 	U_{\X,\b y}(\b\varepsilon) = U^\ast.
 \end{equation*}The results of our numerical experiments with the function $f$ from \eqref{testfun} are displayed in \Cref{tab:res_det}.

\begin{remark}
	The norm occuring in the error $\varepsilon_{\mathrm{L}_2}$ can be calculated using Parselval's identity \begin{align*}
		\norm{f - S(\I(U),\X) f}{\Lomd}^2 &= \norm{f}{\Lomd}^2 + \sum_{\k \in \I(U)} \abs{ \fc{\k} - \hat{f}_{\k} }^2 \\ &\quad - \sum_{\k \in \I(U)} \abs{ \fc{\k} }^2.
	\end{align*} This is of course only possible if the original coefficients and the norm of the function $f$ is known.
\end{remark}

\begin{table*}[ht]\centering
\begin{tabular}{@{}rrrrrrrrrrrrr@{}}\toprule \multicolumn{3}{c}{size of index sets} & \phantom{i}& \multicolumn{2}{c}{relative errors $\X_{\text{cheb}}$} & \phantom{i} &\multicolumn{2}{c}{relative errors $\X_{\text{uni}}$} \\\cmidrule{1-3} \cmidrule{5-6} \cmidrule{8-9}  $N_1$ & $N_2$ &  $\abs{\I(U_{2})}$ && $\varepsilon_{\ell_2}^{\X_{\text{cheb}}}(f,\tilde{f}_1)$ & $\varepsilon_{\mathrm{L}_2}(f,\tilde{f}_1)$  && $\varepsilon_{\ell_2}^{\X_{\text{uni}}}(f,\tilde{f}_2)$ & $\varepsilon_{\mathrm{L}_2}(f,\tilde{f}_2)$  \\\midrule 
		20 & 8 & 1525 && $5.1 \cdot {10}^{-4}$ & $6.9 \cdot {10}^{-4}$ && $5.3 \cdot {10}^{-4}$ & $8.9 \cdot {10}^{-4}$ \\ 
		20 & 12 & 3541 && $1.5 \cdot {10}^{-4}$ & $4.1 \cdot {10}^{-4}$ && $3.2 \cdot {10}^{-4}$ & $5.1\cdot {10}^{-3}$ \\
		20 & 16 & 6453 && $6.8 \cdot {10}^{-5}$ & $3.9 \cdot {10}^{-4}$ && $2.8 \cdot {10}^{-3}$ & $2.6 \cdot {10}^{-1}$ \\ 
		20 & 20 & 10261 && $3.3 \cdot {10}^{-3}$ & $1.6 \cdot {10}^{-1}$ && $2.8 \cdot {10}^{-3}$ & $5.4 \cdot {10}^{-1}$ \\ \midrule
		40 & 8 & 1685 && $5.0 \cdot {10}^{-4}$ & $6.9 \cdot {10}^{-4}$ && $5.2 \cdot {10}^{-4}$ & $9.0 \cdot {10}^{-4}$ \\ 
		40 & 12 & 3701 && $1.4 \cdot {10}^{-4}$ & $4.0 \cdot {10}^{-4}$ && $3.8 \cdot {10}^{-4}$ & $6.7 \cdot {10}^{-3}$ \\
		40 & 16 & 6613 && $5.7 \cdot {10}^{-5}$ & $3.8 \cdot {10}^{-4}$ && $2.9 \cdot {10}^{-3}$ & $2.8 \cdot {10}^{-1}$ \\ 
		40 & 20 & 10421 && $1.3 \cdot {10}^{-4}$ & $2.0 \cdot {10}^{-1}$ && $2.6 \cdot {10}^{-3}$ & $5.7 \cdot {10}^{-1}$ \\ \bottomrule
	\end{tabular}
\caption{Results of the detection step for important ANOVA terms of $f$ with $M = 10000$ Chebyshev distributed nodes $\X_{\text{cheb}}$ and uniformly distributed nodes $\X_{\text{uni}}$. We define $\tilde{f}_1 \coloneqq S(\X_{\text{cheb}}, \I(U_2))f$ and $\tilde{f}_2 \coloneqq S(\X_{\text{uni}}, \I(U_2))f$.}
\label{tab:res_det}
\end{table*}

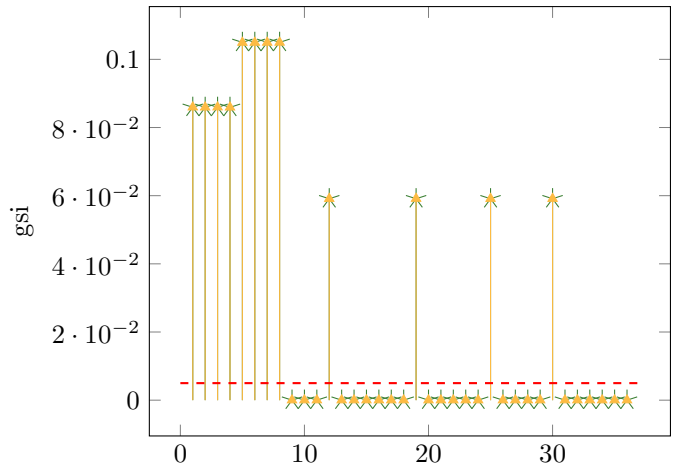
\begin{figure}[ht!]
	\begin{center}
		\centering
		\begin{tikzpicture}
			\begin{axis}[ylabel={gsi}]
				\addplot+[ycomb, OliveGreen, mark=star, mark options={OliveGreen, scale=2}] plot coordinates
				{( 1, 0.08592995960479803 )( 2, 0.08591394984599812 )( 3, 0.08592370784663406 )( 4, 0.08591524733018549 )( 5, 0.10497193710777422 )( 6, 0.10498180232329457 )( 7, 0.10498249061909459 )( 8, 0.10498410729081091 )( 9, 6.5232719510016385e-9 )( 10, 5.223413977217038e-9 )( 11, 4.596492921561453e-9 )( 12, 0.05909507440549067 )( 13, 6.624138837153116e-9 )( 14, 7.388679554760323e-9 )( 15, 5.933970622134937e-9 )( 16, 5.915657710745904e-9 )( 17, 5.209020291887556e-9 )( 18, 4.692234301658733e-9 )( 19, 0.0590994275083231 )( 20, 5.254066079586256e-9 )( 21, 7.973019935295283e-9 )( 22, 4.559764462524055e-9 )( 23, 4.811672305380702e-9 )( 24, 6.4188011503111255e-9 )( 25, 0.059097092894912454 )( 26, 7.744733334504471e-9 )( 27, 6.240101667578656e-9 )( 28, 6.367209370750284e-9 )( 29, 6.442484646316358e-9 )( 30, 0.0591050593628206 )( 31, 6.002083506619601e-9 )( 32, 5.863931575181954e-9 )( 33, 5.7489763015387054e-9 )( 34, 4.988728512292795e-9 )( 35, 7.126471679412714e-9 )( 36, 6.210938390859302e-9 )};
				\addplot+[ycomb, Dandelion, mark=triangle*, mark options={Dandelion}] plot coordinates
				{( 1, 0.08595107241538494 )( 2, 0.08592275680821813 )( 3, 0.08596324132549159 )( 4, 0.08591962373395347 )( 5, 0.10492415150952038 )( 6, 0.10492350641239008 )( 7, 0.1049397327114645 )( 8, 0.10495162652635859 )( 9, 1.8501417511032337e-8 )( 10, 2.2090186898335547e-8 )( 11, 1.9198441785364944e-8 )( 12, 0.05911832288820812 )( 13, 4.649892272574514e-8 )( 14, 3.8826616567496184e-8 )( 15, 4.734027922626823e-8 )( 16, 1.4488369408298909e-8 )( 17, 2.8493756943268905e-8 )( 18, 3.474217525300559e-8 )( 19, 0.05914176057653998 )( 20, 2.730180172505818e-8 )( 21, 2.5334762960753855e-8 )( 22, 1.9680822620612162e-8 )( 23, 1.439172237727401e-8 )( 24, 3.2882837771343375e-8 )( 25, 0.059122353741133324 )( 26, 2.4599350331131996e-8 )( 27, 1.394103125475747e-8 )( 28, 2.685369427219416e-8 )( 29, 2.91737387189762e-8 )( 30, 0.0591212018678416 )( 31, 3.978295364862823e-8 )( 32, 1.9204211832201168e-8 )( 33, 1.1263730856769696e-8 )( 34, 4.312436480994062e-8 )( 35, 2.1232460578358228e-8 )( 36, 3.0535845253613784e-8 )};
				\addplot[red,sharp plot,update limits=false, dashed, thick] 
				coordinates {(0,5e-3) (37,5e-3)};
			\end{axis}
		\end{tikzpicture}
	\end{center}
	\caption{Behavior of the global sensitivity indices $\gsi{\b u}{S(\X_{\text{cheb}}, \I(U_2))f}$ in green and $\gsi{\b u}{S(\X_{\text{uni}}, \I(U_2))f}$ in orange for parameters $N_1 = 20$, $N_2 = 8$.} 
	\label{fig:gsi}
\end{figure}

We observe that increasing the parameters $N_1, N_2$ will ultimately lead to an increasing generalization error $\eps_{\mathrm{L}_2}$. This is consistent with our results in \cref{sec:approx:lsqr} since we cannot guarantee with enough certainty that the system matrix in the normal equations has good properties if the index set size increases beyond a certain point. This effect appears sooner with the uniform nodes $\X_{\text{uni}}$ which is connected to the necessary preconditioning, see \cref{sec:uninodes}. In \cref{fig:gsi} we have visualized the global sensitivity indices for the parameters $N_1 = 20$ and $N_2 = 8$. The terms in $U^\ast$ are clearly separated from the terms in its complement, i.e., the active set detection worked well for this example. Moreover, we observe that the gsi obtained by approximation with uniform nodes and with Chebyshev nodes are very close together.

There clearly exist $N_1$, $N_2$, and $\b\varepsilon$, e.g.\, $N_1 = 20$, $N_2 = 8$, $\b\eps = (0.005,0.005)$, such that we are able to recover the set of ANOVA terms, i.e.,  $U_{\X_{\text{uni}},\b y}(\b\varepsilon) = U_{\X_{\text{cheb}},\b y}(\b\varepsilon) = U^\ast$. We aim to improve our approximation quality with the given data by solving the minimization problem with the active set $U^\ast$. Here, we could choose individual index sets for every ANOVA term in $U^\ast$ to form $\I(U^\ast)$ based on the global sensitivity indices, but for our function order-dependence can be maintained. \Cref{tab:res_approx} shows the results of the approximation using the index set $\I(U^\ast)$.

\begin{table*}[ht]\centering
	\begin{tabular}{@{}rrrrrrrrrrrrr@{}}\toprule \multicolumn{3}{c}{size of index sets} & \phantom{i}& \multicolumn{2}{c}{relative errors $\X_{\text{cheb}}$} & \phantom{i} &\multicolumn{2}{c}{relative errors $\X_{\text{uni}}$} \\\cmidrule{1-3} \cmidrule{5-6} \cmidrule{8-9}  $N_1$ & $N_2$ &  $\abs{I(U_{d_s})}$ && $\varepsilon_{\ell_2}^{\X_{\text{cheb}}}(f,\tilde{f}_1)$ & $\varepsilon_{\mathrm{L}_2}(f,\tilde{f}_1)$  && $\varepsilon_{\ell_2}^{\X_{\text{uni}}}(f,\tilde{f}_2)$ & $\varepsilon_{\mathrm{L}_2}(f,\tilde{f}_2)$  \\\midrule 
		60  & 12 & 957 && $1.6 \cdot {10}^{-4}$ & $3.8 \cdot {10}^{-4}$ && $1.6 \cdot {10}^{-4}$ & $4.1 \cdot {10}^{-4}$ \\ 
		60  & 20 & 1917 && $4.5 \cdot {10}^{-5}$ & $3.4 \cdot {10}^{-4}$ && $1.6 \cdot {10}^{-4}$ & $4.1 \cdot {10}^{-4}$ \\ 
		60  & 28 & 3389 && $1.8 \cdot {10}^{-5}$ & $3.4 \cdot {10}^{-4}$ && $6.9 \cdot {10}^{-4}$ & $6.9\cdot {10}^{-2}$ \\ \midrule
		80  & 12 & 1117 && $1.6 \cdot {10}^{-4}$ & $3.8 \cdot {10}^{-4}$ && $1.6 \cdot {10}^{-4}$ & $4.2 \cdot {10}^{-4}$ \\ 
		80  & 20 & 2077 && $4.5 \cdot {10}^{-5}$ & $3.4 \cdot {10}^{-4}$ && $7.1 \cdot {10}^{-4}$ & $7.2 \cdot {10}^{-2}$ \\ 
		80  & 28 & 3549 && $1.8 \cdot {10}^{-5}$ & $3.3 \cdot {10}^{-4}$ && $1.3 \cdot {10}^{-3}$ & $1.9 \cdot {10}^{-1}$ \\ \bottomrule
	\end{tabular}
	\caption{Results of approximation for important ANOVA terms of $f$ with $M = 10000$ Chebyshev distributed nodes $\X_{\text{cheb}}$ and uniformly distributed nodes $\X_{\text{uni}}$. We define $\tilde{f}_1 \coloneqq S(\X_{\text{cheb}}, \I(U^\ast))f$ and $\tilde{f}_2 \coloneqq S(\X_{\text{uni}}, \I(U^\ast))f$.}
	\label{tab:res_approx}
\end{table*}

We observe that the reduction of the ANOVA terms to $U^\ast$ yields a benefit with regard to the approximation quality. This results from the reduction in model complexity, i.e., we have larger oversampling factors for the same node set. The comparison of Chebyshev and uniform nodes yields a similar behavior as for the detection step. We achieve better errors for the Chebyshev distributed nodes without the addition of preconditioning which was to be expected since we choose a different sampling distribution for a weighted space.

\subsection{Friedman Benchmark Functions}\label{sec:friedman}

The Friedmann functions are a well-known example for the approximation of functions with scattered data, see e.g.~\cite{MeLeHo03, BeGaMo09, BiDaLa11}. We define the three non-periodic Friedmann functions as
\begin{align*}
	&\fun{\tilde{f}_1}{[0,1]^{10}}{\R}, && f_1(\x) = 10\sin(\pi x_1 x_2)+20(x_3-0.5)^2+10 x_4 + 5 x_5 \\
	&\fun{\tilde{f}_2}{[0,1]^{4}}{\R}, \, && f_2(\x) = \sqrt{ s_1^2(x_1) + \left( s_2(x_2) \cdot x_3 - \frac{1}{s_2(x_2) \cdot s_4(x_4)} \right)^2 } \\
	&\fun{\tilde{f}_3}{[0,1]^{4}}{\R}, \, && f_3(\x) = \arctan\left( \frac{s_2(x_2) \cdot x_3 - (s_2(x_2) \cdot s_4(x_4))^{-1} }{s_1(x_1)} \right)\\
\end{align*} with variable scalings $s_1(x_1) = 100x_1$, $s_2(x_2) = 520\pi x_2 + 40\pi$, and $s_4(x_4) = 10x_4+1$. The function $f_1$ has spatial dimension $10$. However, only five of the ten variables have any influence on the function. For $f_1$ and $f_2$ we also do not have more than two variables interact simultaneously, i.e., the superposition dimension for accuracy $\delta = 1$ is $2$, cf.\ \eqref{eq:anova:superposition}. For $f_3$ the superposition dimension for accuracy $\delta = 1$ would be equal to the spatial dimension $4$. Since the original functions are given on the interval $[0,1]^d$ we define for our experiments\begin{equation*}
	\fun{f_i}{[-1,1]^{d^{(i)}}}{\R}, \, f_i(\x) = \tilde{f}_i( 0.5(x_1+1), 0.5(x_2+1), \dots, 0.5(x_{d_i}+1) ),
\end{equation*} $i = 1, 2, 3$ with $d^{(1)} = 10$ and $d^{(2)} = d^{(3)} = 4$.

For calculation the approximations we use three sets of uniformly distributed nodes $\X_1 \subset (-1,1)^10$, $\X_2 \subset (-1,1)^4$, and $\X_3 \subset (-1,1)^4$ with $\abs{\X_i} = 200$. Moreover, we evaluate the functions at those nodes and additionally add Gaussian noise, i.e., \begin{equation*}
	\b{y}_i = \left( f(\x) + \eta_i \right)_{\x \in \X_i}, i = 1,2,3,
\end{equation*}  where the noise $\eta_i$ has zero mean and variances $\sigma_1 = 1$, $\sigma_2 = 125$, $\sigma_3 = 0.1$, respectively. In order to measure the error of an approximation $g_i$ on a Friedman function $f_i$, we consider the mean square error (mse) \begin{equation}
	\mathrm{mse}(f_i,g_i) = \frac{1}{1000} \sum_{\x \in \overline{\X}_i} \abs{f_i(\x)+\eta_i-g_i(\x)}^2
\end{equation} on additional sets of nodes $\overline{\X}_1 \subset (-1,1)^{10}$, $\overline{\X}_2 \subset (-1,1)^4$, and $\overline{\X}_3 \subset (-1,1)^4$ with $\abs{\overline{\X}_i} = 1000$.

\subsubsection{Friedman 1}

The first goal for Friedman 1 is to identify that the variables $x_6$ to $x_{10}$ have no contribution to the function. To this end we computed the approximation $S(\X_1, \I(U_2))f_1$ and considered the global sensitivity indices of the one-dimensional sets $\{i\}$, $i=1,2,\dots,10$. The result is depicted in \cref{fig:gsi_f1}. We observe that the variables $x_1$ to $x_5$ can be clearly separated from the rest. Therefore, we proceed with the active set \begin{equation}
	\tilde{U}_2 \coloneqq \{ \u \subset \{1,2,3,4,5\} \colon \au \leq 2 \}.
\end{equation}

Our second goal is to find the active set of terms $$U^\ast_1 = \{ \emptyset, \{1\}, \{2\} , \{3\} , \{4\} , \{5\} , \{1,2\}  \}.$$ To this end, we calculate the approximation $S(\X_1, \I(\tilde{U}_2))f_1$ with parameters $N_1 = N_2 = 4$. This yields a mean square error on the test nodes $\overline{\X}_1$ of $2.88$. As depicted in \cref{fig:gsi_f1:2} we are able to recover $U^\ast_1$ with a clear separation by choosing, e.g., $\b\eps = (0.03,0.03)$.

\begin{figure}
	\begin{subfigure}[t]{.45\textwidth}
		\centering
		\begin{tikzpicture}[scale=0.7]
			\begin{axis}[ylabel={gsi}]
				\addplot+[ycomb, OliveGreen, mark=star, mark options={OliveGreen, scale=2}] plot coordinates
				{(1, 0.15100375459344623) (2, 0.19405722054063987) (3, 0.09212246537139945) (4, 0.33432093891750747) (5, 0.1466966940116997) 				};
				\addplot+[ycomb, Dandelion, mark=triangle*, mark options={Dandelion}] plot coordinates
				{(6, 0.006379880635173035) (7, 0.0002751281447277956) (8, 0.0006918760010818275) (9, 0.0013039540011735935) (10, 0.004382263723088144)};
				\addplot[red,sharp plot,update limits=false, dashed, thick] 
				coordinates {(0,0.02) (11,0.02)};
			\end{axis}
		\end{tikzpicture}
		\caption{Global sensitivity indices $\gsi{\{i\}}{S(\X_1, \I(U_2))f_1}$, $i=1,2,3,4,5$, in green and $\gsi{\{i\}}{S(\X_1, \I(U_2))f_1}$, $i=6,7,8,9,10$, in orange for parameters $N_1 = 4$, $N_2 = 2$.}		\label{fig:gsi_f1}
	\end{subfigure}\hfill%
	\begin{subfigure}[t]{.45\textwidth}
		\centering
		\begin{tikzpicture}[scale=0.7]
			\begin{axis}[ylabel={gsi}]
				\addplot+[ycomb, OliveGreen, mark=star, mark options={OliveGreen, scale=2}] plot coordinates
				{(1, 0.15028855648551392) (2, 0.09077771791299585) (3, 0.10529892891061698) (4, 0.30192707364000215) (5, 0.14294077974105296) (6, 0.12349094405116816) };
				\addplot+[ycomb, Dandelion, mark=triangle*, mark options={Dandelion}] plot coordinates
				{(7, 0.004000444463240325) (8, 0.013358915462593966) (9, 0.012213644385319477) (10, 0.008755961428892251) (11, 0.0031610805927739473) (12, 0.010808010727823886) (13, 0.010759624068840252) (14, 0.01737792591285547) (15, 0.00484039221631036)};
				\addplot[red,sharp plot,update limits=false, dashed, thick] 
				coordinates {(0,0.03) (16,0.03)};
			\end{axis}
		\end{tikzpicture}
		\caption{Global sensitivity indices $\gsi{\u}{S(\X_1, \I(\tilde{U}_2))f_1}$, $\u \in U^\ast_1$ in green and $\u\in \tilde{U}_2\setminus U^\ast_1$ in yellow with parameters $N_1 = N_2 = 4$.}		\label{fig:gsi_f1:2}
	\end{subfigure}
\caption{Numerical experiments with the Friedman 1 function.}
\end{figure}
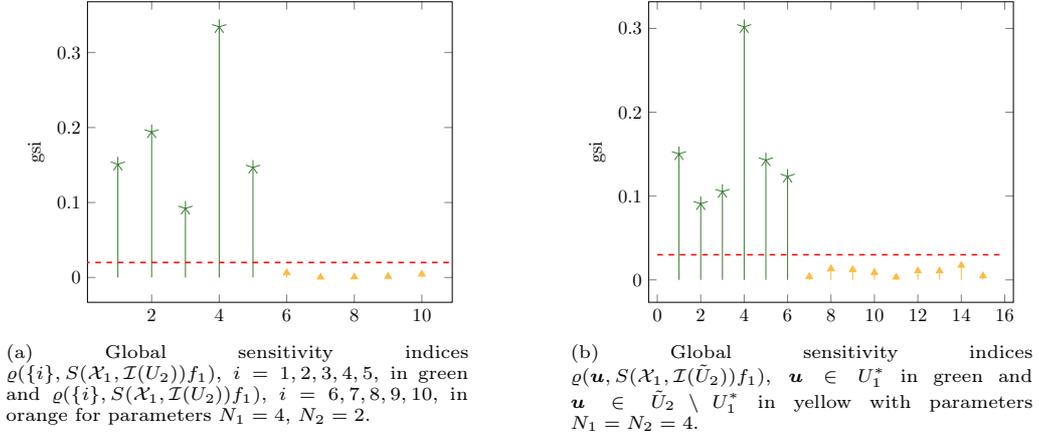

We computed the approximation $S(\X, \I(U_1^\ast))f_1$ with parameters $N_1 = N_2 = 4$ on 100 randomly generated uniformly i.i.d. pairs of node sets $(\X, \mathcal{T}) \subset (-1,1)^{10} \times (-1,1)^{10}$ with $\abs{\X} = 200$ and $\abs{\mathcal T} = 1000$. The mean square error was calculated on the sets $\mathcal T$. The median of the 100 mses is $1.17$. 

\subsubsection{Friedman 2}

For the Friedman 2 function we want to identify the active set of terms from our scattered data $\X_2$ and $\b{y}_2$. To this end, we computed the approximation $S(\X_2, \I(U_2))f_2$ with parameters $N_1 = N_2 = 2$. This yielded a mse on the data $\overline{\X}_2$ of $16.44 \cdot 10^3$. The resulting sensitivity indices are displayed in \cref{fig:gsi_f2}. We deduce that the terms \begin{equation}
	U^\ast_2 \coloneqq \{ \emptyset, \{2\} , \{3\} , \{2,3\}  \}
\end{equation} are clearly more important than the rest. They can be obtained by choosing a threshold vector, e.g., $\b\eps = (0.03,0.03)$.

\begin{figure}[ht!]
	\begin{center}
		\centering
		\begin{tikzpicture}[scale=0.75]
			\begin{axis}[ylabel={gsi}]
				\addplot+[ycomb, OliveGreen, mark=star, mark options={OliveGreen, scale=2}] plot coordinates
				{ (2, 0.3374202104336745) (3, 0.43513421746888004)  (8, 0.22057675207532537)  };
				\addplot+[ycomb, Dandelion, mark=triangle*, mark options={Dandelion}] plot coordinates
				{(1, 0.0031160244703633457) (9, 0.001342973528388098)(4, 0.00020614482773310283) (5, 0.0004122708547179238) (6, 0.0006754164855170884) (7, 0.0008477714622283992) (10, 0.0002682183931721676)};
				\addplot[red,sharp plot,update limits=false, dashed, thick] 
				coordinates {(0,0.03) (16,0.03)};
			\end{axis}
		\end{tikzpicture}
	\end{center}
	\caption{Global sensitivity indices $\gsi{\u}{S(\X_2, \I(U_2))f_2}$, $\u \in U^\ast_2$ in green and $\u\in U_2\setminus U^\ast_2$ in yellow with parameters $N_1 = N_2 = 2$.} 
	\label{fig:gsi_f2}
\end{figure}
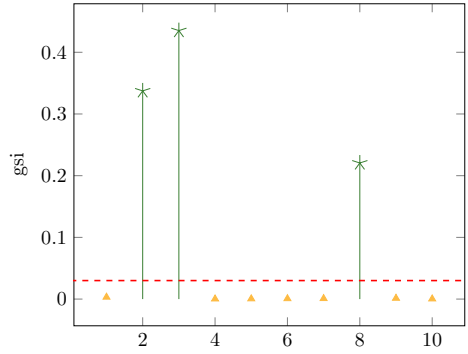

We computed the approximation $S(\X, \I(U_2^\ast))f_2$ with parameters $N_1 = N_2 = 2$ on 100 randomly generated uniform i.i.d.\ pairs of node sets $(\X, \mathcal{T}) \subset (-1,1)^{10} \times (-1,1)^{10}$ with $\abs{\X} = 200$ and $\abs{\mathcal T} = 1000$. The mean square error was calculated on the sets $\mathcal T$. The median of the 100 mses is $16.09 \cdot 10^3$. 

\subsubsection{Friedman 3}

As before, we first aim to identify an active set of terms from the scattered data $\X_3$ and $\b{y}_3$. The approximation $S(\X_3, \I(U_2))f_3$ with parameters $N_1 = 8$,  $N_2 = 2$ yielded a mean square error of $1.8 \cdot 10^{-2}$ on the node set $\overline{X}_3$. The sensitivity indices are displayed in \cref{fig:gsi_f3}. From this we identify the active set as \begin{equation}
	U^\ast_3 \coloneqq \{ \emptyset, \{1\} , \{2\} ,  \{3\},  \{1,2\} , \{1,3\} , \{2,3\}  \},
\end{equation} e.g., with a choice of $\b\eps = (0.002,0.002)$.

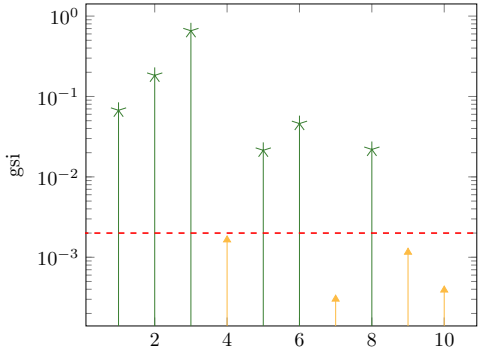
\begin{figure}[ht!]
	\begin{center}
		\centering
		\begin{tikzpicture}[scale=0.75]
			\begin{axis}[ylabel={gsi},ymode=log, log origin=infty]
				\addplot+[ycomb, OliveGreen, mark=star, mark options={OliveGreen, scale=2}] plot coordinates
				{(1, 0.06733958609733687) (2, 0.1831868334697192) (3, 0.6566338208242045)  (5, 0.021392184389478423) (6, 0.0460206245214079)  (8, 0.02193296692690956)};
				\addplot+[ycomb, Dandelion, mark=triangle*, mark options={Dandelion}] plot coordinates
				{ (4, 0.0016456207721343152) (7, 0.00030161904821638383) (9, 0.0011555820641467587) (10, 0.00039116188644626974) };
				\addplot[red,sharp plot,update limits=false, dashed, thick] 
				coordinates {(0,0.002) (16,0.002)};
			\end{axis}
		\end{tikzpicture}
	\end{center}
	\caption{Global sensitivity indices $\gsi{\u}{S(\X_3, \I(U_2))f_3}$, $\u \in U^\ast_3$ in green and $\u\in U_2\setminus U^\ast_3$ in yellow with parameters $N_1 =8$, $N_2 = 2$.} 
	\label{fig:gsi_f3}
\end{figure}

We performed the approximation $S(\X, \I(U_3^\ast))f_3$ with parameters $N_1 =8$, $N_2 = 2$ on 100 randomly generated uniform i.i.d. pairs of node sets $(\X, \mathcal{T}) \subset (-1,1)^{4} \times (-1,1)^{4}$ with $\abs{\X} = 200$ and $\abs{\mathcal T} = 1000$. The median of the mean square error on the sets $\mathcal T$ is $17.22 \cdot 10^{-3}$. 

\subsubsection{Comparison}

\cref{tab:friedman} contains the benchmark data from \cite{MeLeHo03} with a support vector machine (SVM), a linear model (lm), a neural network (mnet) and a random forest (rForst) as well as the results with our method (ANOVAapprox). We are able to achieve a more accurate approximation in the exact same setting for every one of the three functions. The value for ANOVAapprox was obtained by computing the model on 100 randomly generated node sets and computing the error on 100 randomly generated test sets. 

\begin{table}[tbhp]
	\begin{center}
		\begin{tabular}{cccccc} 
			\toprule
			& svm & lm & mnet & rForst & ANOVAapprox \\ 
			\midrule 
			Friedman 1 & 4.36 & 7.71 & 9.21 & 6.02 & \textbf{1.17} \\
			Friedman 2 ($\cdot \, 10^3$) & 18.13 & 36.15  & 19.61  & 21.50 &  \textbf{16.09}  \\
			Friedman 3 ($\cdot \, 10^{-3}$) & 23.15 & 45.42 & 18.12 & 22.21 & \textbf{17.22} \\
			\bottomrule
		\end{tabular}
		\caption{Mean squared errors (MSE) for different methods when approximating Friedman functions in \cite{MeLeHo03} compared to  our method (ANOVAapprox). All values are the medians of the experiment MSEs and the best value for every function is highlighted.}\label{tab:friedman}
	\end{center}
\end{table} 

\section{Summary}

In this paper we considered the classical ANOVA decomposition for functions $f$ in weighted Lebesgue spaces $\Lomd$ with orthogonal polynomials as bases. Specifically, we proved relations between the basis coefficients of the projections $\mathrm{P}_{\u} f$, the ANOVA terms $f_{\u}$, and the function $f$. Furthermore, we considered sensitivity analysis and truncating the ANOVA decomposition to a certain subset of terms.

We introduced a method to determine important ANOVA terms, i.e., terms with a high global sensitivity index $\gsi{\u}{f}$, by approximation with index sets with a low-dimensional structure related to the truncated ANOVA decomposition. Our scenario was scattered data approximation where only a node set $\X$ and possibly noisy function values $\b y = (f(\x))_{\x\in \X}$ are known. Properties of the corresponding normal equations were considered om the case of nodes distributed according to the Chebyshev density. We also introduced preconditioning for uniformly distributed nodes and considered properties of the resulting system as well. 

The numerical experiments show that the method works using a specific test function consisting of sums of products of B-splines. The test function had a superposition dimension of $3$ for an arbitrary accuracy, i.e., $\mathrm{T}_3 f = f$, and we were able to recover the active set of ANOVA terms with our approach. Experiments with the Friedman functions showed that we proposed a competitive method that yields better results on these functions as other well-known methods such as support vector machines.

\section*{Acknowledgments}
We thank Tino Ullrich and Toni Volkmer for fruitful discussions on the contents of this paper. Daniel Potts acknowledges funding by Deutsche Forschungsgemeinschaft (German Research Foundation) -- Project--ID 416228727 -- SFB 1410. Michael Schmischke is supported by the BMBF grant 01$|$S20053A. 

\bibliographystyle{elsarticle-num}
\bibliography{references}

\end{document}